\algnewcommand\algorithmicinput{\textbf{Input:}}
\algnewcommand\Input{\item[\algorithmicinput]}
\algnewcommand\algorithmicoutput{\textbf{Output:}}
\algnewcommand\Output{\item[\algorithmicoutput]}
\newcommand{\N}{\mathbb{N}}
\newcommand{\R}{\mathbb{R}}
\DeclareMathOperator{\im}{im}
\newcommand{\norm}[1]{\Vert #1 \Vert}
\newcommand{\ip}[2]{\langle #1, #2 \rangle}
\DeclareMathOperator*{\argmin}{argmin}
\DeclareMathOperator*{\lip}{Lip}
\DeclareMathOperator{\s}{s}
\newcommand{\setmapsto}{\multimap}
\newcommand{\proj}[2]{P_{#1}(#2)}
\def\widebreve{\mathpalette\wide@breve}
\def\wide@breve#1#2{\sbox\z@{$#1#2$}%
     \mathop{\vbox{\m@th\ialign{##\crcr
\kern0.08em\brevefill#1{0.8\wd\z@}\crcr\noalign{\nointerlineskip}%
                    $\hss#1#2\hss$\crcr}}}\limits}
\def\brevefill#1#2{$\m@th\sbox\tw@{$#1($}%
  \hss\resizebox{#2}{\wd\tw@}{\rotatebox[origin=c]{90}{\upshape(}}\hss$}
\newcommand{\gencone}[4]{{#1}_{#2}^{#4}(#3)} 
\newcommand{\tancone}[2]{\gencone{T}{#1}{#2}{}} 
\newcommand{\restancone}[2]{\gencone{\widebreve{T}}{#1}{#2}{}} 
\newcommand{\ushort}[1]{\mkern 0.9mu\underline{\mkern-0.9mu#1\mkern-0.9mu}\mkern 0.9mu}
\newcommand{\tp}{\top}
\DeclareMathOperator{\rank}{rank}
\newcommand{\st}{\mathrm{St}}
\newcommand{\ppgd}{\mathrm{P}^2\mathrm{GD}}
\newcommand{\ppgdr}{\mathrm{P}^2\mathrm{GDR}}
\newcommand{\rfd}{\mathrm{RFD}}
\newcommand{\rfdr}{\mathrm{RFDR}}
\crefname{hypothesis}{Hypothesis}{Hypotheses}
\title{An apocalypse-free first-order low-rank optimization algorithm with at most one rank reduction attempt per iteration\thanks{This work was supported by the Fonds de la Recherche Scientifique -- FNRS and the Fonds Wetenschappelijk Onderzoek -- Vlaanderen under EOS Project no 30468160.}}
\author{Guillaume Olikier\thanks{ICTEAM Institute, UCLouvain, Avenue Georges Lema\^{\i}tre 4, 1348 Louvain-la-Neuve, Belgium (\email{guillaume.olikier@uclouvain.be}, \email{pa.absil@uclouvain.be}).}
\and P.-A. Absil\footnotemark[2]}
\begin{document}

\maketitle

\begin{abstract}
We consider the problem of minimizing a differentiable function with locally Lipschitz continuous gradient over the real determinantal variety, and present a first-order algorithm designed to find stationary points of that problem. This algorithm applies steps of a retraction-free descent method proposed by Schneider and Uschmajew (2015), while taking the numerical rank into account to attempt rank reductions. We prove that this algorithm produces a sequence of iterates the accumulation points of which are stationary, and therefore does not follow the so-called apocalypses described by Levin, Kileel, and Boumal (2022). Moreover, the rank reduction mechanism of this algorithm requires at most one rank reduction attempt per iteration, in contrast with the one of the $\ppgdr$ algorithm introduced by Olikier, Gallivan, and Absil (2022) which can require a number of rank reduction attempts equal to the rank of the iterate in the worst-case scenario.
\end{abstract}

\begin{keywords}
Convergence analysis $\cdot$ Stationarity $\cdot$ Low-rank optimization $\cdot$ Determinantal variety $\cdot$ First-order method $\cdot$ Tangent cones $\cdot$ QR factorization $\cdot$ Singular value decomposition.
\end{keywords}

\begin{AMS}
14M12, 65K10, 90C26, 90C30, 40A05.
\end{AMS}

\section{Introduction}
\label{sec:Introduction}
As in \cite{OlikierGallivanAbsil2022}, we consider the problem
\begin{equation}
\label{eq:LowRankOpti}
\min_{X \in \R_{\le r}^{m \times n}} f(X)
\end{equation}
of minimizing a differentiable function $f : \R^{m \times n} \to \R$ with locally Lipschitz continuous gradient over the \emph{determinantal variety} \cite[Lecture~9]{Harris}
\begin{equation}
\label{eq:RealDeterminantalVariety}
\R_{\le r}^{m \times n} := \{X \in \R^{m \times n} \mid \rank X \le r\},
\end{equation}
$m$, $n$, and $r$ being positive integers such that $r < \min\{m,n\}$.
This problem appears in several applications such as matrix equations, model reduction, matrix sensing, and matrix completion; see, e.g., \cite{SchneiderUschmajew2015,HaLiuBarber2020} and the references therein.
As problem \eqref{eq:LowRankOpti} is, in general, intractable \cite{GillisGlineur2011}, our goal is to find a \emph{stationary point} of this problem, i.e., a zero of the \emph{stationarity measure}
\begin{equation}
\label{eq:StationarityMeasure}
\s(\cdot; f, \R_{\le r}^{m \times n}) :
\R_{\le r}^{m \times n} \to \R :
X \mapsto \norm{\proj{\tancone{\R_{\le r}^{m \times n}}{X}}{-\nabla f(X)}}
\end{equation}
that returns the norm of any projection of $-\nabla f(X)$ onto the tangent cone to $\R_{\le r}^{m \times n}$ at $X$; the notation is introduced in Section~\ref{sec:Notation}. Indeed, by \cite[Lemmas~A.7 and A.8]{LevinKileelBoumal2022}, the correspondence
\begin{equation*}
\R_{\le r}^{m \times n} \setmapsto \R^{m \times n} : X \mapsto \proj{\tancone{\R_{\le r}^{m \times n}}{X}}{-\nabla f(X)}
\end{equation*}
depends on $f$ only through its restriction $f|_{\R_{\le r}^{m \times n}}$, and, by \cite[Theorem~6.12]{RockafellarWets} and \cite[Proposition~A.6]{LevinKileelBoumal2022}, if $X \in \R_{\le r}^{m \times n}$ is a local minimizer of $f|_{\R_{\le r}^{m \times n}}$, then $\s(X; f, \R_{\le r}^{m \times n}) = 0$.

To the best of our knowledge, the second-order method given in \cite[Algorithm~1]{LevinKileelBoumal2022} and the first-order method given in \cite[Algorithm~2]{OlikierGallivanAbsil2022}, dubbed $\ppgdr$, are the only two algorithms in the literature that provably converge to stationary points of \eqref{eq:LowRankOpti}. Other algorithms, such as \cite[Algorithm~3]{SchneiderUschmajew2015}, known as $\ppgd$, and \cite[Algorithm~4]{SchneiderUschmajew2015}, which we call $\rfd$ because it is a retraction-free descent method, can fail as they can produce a feasible sequence $(X_i)_{i \in \N}$ that converges to some point $X$ with the property that $\lim_{i \to \infty} \s(X_i; f, \R_{\le r}^{m \times n}) = 0 < \s(X; f, \R_{\le r}^{m \times n})$. Such a triplet $(X, (X_i)_{i \in \N}, f)$ is called an \emph{apocalypse} and the point $X$, which necessarily satisfies $\rank X < r$, is said to be \emph{apocalyptic} \cite[Definition~2.7]{LevinKileelBoumal2022}.

$\ppgdr$ is $\ppgd$ equipped with a rank reduction mechanism. On the one hand, this mechanism ensures the apocalypse-free property. On the other hand, as mentioned in \cite[\S 6]{OlikierGallivanAbsil2022}, it can generate a significant computational overhead in some situations. Indeed, its for-loop can consider up to $r$ rank reductions of the current iterate to each of which the $\ppgd$ map \cite[Algorithm~1]{OlikierGallivanAbsil2022} is applied. This is further discussed in Section~\ref{sec:PracticalImplementationComputationalCost}.

In this paper, we introduce a first-order algorithm on $\R_{\le r}^{m \times n}$ (Algorithm~\ref{algo:RFDR}), called $\rfdr$, that is apocalypse-free, in the sense that every accumulation point of the generated sequence is a stationary point (Theorem~\ref{thm:RFDRPolakConvergence}), which also implies that $\s(\cdot; f, \R_{\le r}^{m \times n})$ goes to zero along every convergent subsequence (Corollary~\ref{coro:RFDRPolakConvergence}).
$\rfdr$ is $\rfd$ equipped with a rank reduction mechanism.
$\rfdr$ presents two advantages over $\ppgdr$, which is the only other known apocalypse-free first-order optimization algorithm on $\R_{\le r}^{m \times n}$. First, it inherits the advantage that $\rfd$ has over $\ppgd$, namely being retraction-free; in other words, the updates are performed along a straight line. Second, its rank reduction mechanism is more efficient than the one of $\ppgdr$ because it ensures the apocalypse-free property by performing at most one rank reduction attempt per iteration (compare Algorithm~\ref{algo:RFDRmap} with \cite[Algorithm~3]{OlikierGallivanAbsil2022}).

This paper is organized as follows. After introducing the notation in Section~\ref{sec:Notation}, we define in Section~\ref{sec:RestrictedTangentConeDeterminantalVariety} the \emph{restricted tangent cone} to $\R_{\le r}^{m \times n}$; the descent direction used by $\rfd$ is the projection of the negative gradient onto that closed cone. In Section~\ref{sec:RFDmap}, we analyze the iteration map of $\rfd$ (Algorithm~\ref{algo:RFDmap}) under the assumption of local Lipschitz continuity of $\nabla f$. We introduce the $\rfdr$ algorithm in Section~\ref{sec:ProposedAlgorithm}, analyze its convergence properties in Section~\ref{sec:ConvergenceAnalysis}, and compare its computational cost with the one of $\ppgdr$ in Section~\ref{sec:PracticalImplementationComputationalCost}. Section~\ref{sec:Conclusion} contains concluding remarks.

\section{Notation}
\label{sec:Notation}
In this section, we introduce the notation used throughout the paper. This is the same notation as in \cite{OlikierGallivanAbsil2022} to which we refer for a more complete review of the background material.
The real vector space $\R^{m \times n}$ is endowed with the Frobenius inner product $\ip{\cdot}{\cdot}$, $\norm{\cdot}$ denotes the Frobenius norm, and, for all $X \in \R^{m \times n}$ and all $\rho \in (0,\infty)$, $B(X,\rho)$ and $B[X,\rho]$ respectively denote the open ball and the closed ball of center $X$ and radius $\rho$ in $\R^{m \times n}$.
Given a nonempty subset $\mathcal{S}$ of $\R^{m \times n}$, the tangent cone to $\mathcal{S}$ at $X \in \mathcal{S}$ is denoted by $\tancone{\mathcal{S}}{X}$, the distance from $X \in \R^{m \times n}$ to $\mathcal{S}$ is denoted by $d(X, \mathcal{S})$, and the projection of $X \in \R^{m \times n}$ onto $\mathcal{S}$ is denoted by $\proj{\mathcal{S}}{X}$.
For every $\ushort{r} \in \{0, \dots, \min\{m,n\}\}$,
\begin{equation*}
\R_{\ushort{r}}^{m \times n} := \{X \in \R^{m \times n} \mid \rank X = \ushort{r}\}
\end{equation*}
is the smooth manifold of $m \times n$ rank-$\ushort{r}$ real matrices, and, if $\ushort{r} \ge 1$,
\begin{equation*}
\st(\ushort{r},n) := \{U \in \R^{n \times \ushort{r}} \mid U^\tp U = I_{\ushort{r}}\}
\end{equation*}
is a Stiefel manifold.
We also write $\R_*^{m \times n} := \R_{\min\{m,n\}}^{m \times n}$.
The singular values of $X \in \R^{m \times n}$ are denoted by $\sigma_1(X) \ge \dots \ge \sigma_{\min\{m,n\}}(X) \ge 0$.

\section{The restricted tangent cone to the determinantal variety}
\label{sec:RestrictedTangentConeDeterminantalVariety}
In this section, we define the restricted tangent cone to $\R_{\le r}^{m \times n}$ (Definition~\ref{defi:RestrictedTangentConeDeterminantalVariety}) and prove that the descent direction introduced in \cite[\S 3.4]{SchneiderUschmajew2015} is the projection of the negative gradient onto that closed cone (Proposition~\ref{prop:ProjectionRestrictedTangentConeDeterminantalVariety}).
Throughout this section, $\ushort{r} \in \{1, \dots, r\}$, $X \in \R_{\ushort{r}}^{m \times n}$, $U \in \st(\ushort{r},m)$, $U_\perp \in \st(m-\ushort{r},m)$, $V \in \st(\ushort{r},n)$, $V_\perp \in \st(n-\ushort{r},n)$, $\im U = \im X$, $\im U_\perp = (\im X)^\perp$, $\im V = \im X^\tp$, and $\im V_\perp = (\im X^\tp)^\perp$.

The set introduced in the following definition is a closed cone contained in
\begin{equation*}
\tancone{\R_{\le r}^{m \times n}}{X}
= \left\{[U \; U_\perp] \begin{bmatrix} A & B \\ C & D \end{bmatrix} [V \; V_\perp]^\tp ~\bigg|~\begin{array}{l} A \in \R^{\ushort{r} \times \ushort{r}},\, B \in \R^{\ushort{r} \times n-\ushort{r}},\\ C \in \R^{m-\ushort{r} \times \ushort{r}},\, D \in \R_{\le r-\ushort{r}}^{m-\ushort{r} \times n-\ushort{r}} \end{array}\right\}.
\end{equation*}

\begin{definition}
\label{defi:RestrictedTangentConeDeterminantalVariety}
The \emph{restricted tangent cone} to $\R_{\le r}^{m \times n}$ at $X$ is
\begin{equation*}
\restancone{\R_{\le r}^{m \times n}}{X}
:= \left\{[U \; U_\perp] \begin{bmatrix} A & B \\ C & D \end{bmatrix} [V \; V_\perp]^\tp ~\bigg|~\begin{array}{l} A \in \R^{\ushort{r} \times \ushort{r}},\, B \in \R^{\ushort{r} \times n-\ushort{r}},\\ C \in \R^{m-\ushort{r} \times \ushort{r}},\, D \in \R_{\le r-\ushort{r}}^{m-\ushort{r} \times n-\ushort{r}}, \\ B = 0_{\ushort{r} \times n-\ushort{r}} \text{ or } C = 0_{m-\ushort{r} \times \ushort{r}} \end{array}\right\}.
\end{equation*}
Furthermore,
\begin{equation*}
\restancone{\R_{\le r}^{m \times n}}{0_{m \times n}}
:= \tancone{\R_{\le r}^{m \times n}}{0_{m \times n}}
= \R_{\le r}^{m \times n}.
\end{equation*}
\end{definition}

The following proposition can be compared to \cite[Proposition~2.7]{OlikierGallivanAbsil2022}.

\begin{proposition}
\label{prop:ProjectionRestrictedTangentConeDeterminantalVariety}
Let $Z \in \R^{m \times n}$ be written as
\begin{equation*}
Z = [U \; U_\perp] \begin{bmatrix} A & B \\ C & D \end{bmatrix} [V \; V_\perp]^\tp
\end{equation*}
with $A = U^\tp Z V$, $B = U^\tp Z V_\perp$, $C = U_\perp^\tp Z V$, and $D = U_\perp^\tp Z V_\perp$. Then,
\begin{equation}
\label{eq:ProjectionRestrictedTangentConeDeterminantalVariety}
\proj{\restancone{\R_{\le r}^{m \times n}}{X}}{Z}
= \left\{\begin{array}{ll}
\begin{array}{l}
[U \; U_\perp] \begin{bmatrix} A & B \\ 0_{m-\ushort{r} \times \ushort{r}} & \proj{\R_{\le r-\ushort{r}}^{m-\ushort{r} \times n-\ushort{r}}}{D} \end{bmatrix} [V \; V_\perp]^\tp
\end{array} & \text{if } \norm{B} > \norm{C},\\
\begin{array}{l}
[U \; U_\perp] \begin{bmatrix} A & B \\ 0_{m-\ushort{r} \times \ushort{r}} & \proj{\R_{\le r-\ushort{r}}^{m-\ushort{r} \times n-\ushort{r}}}{D} \end{bmatrix} [V \; V_\perp]^\tp\\
\cup [U \; U_\perp] \begin{bmatrix} A & 0_{\ushort{r} \times n-\ushort{r}} \\ C & \proj{\R_{\le r-\ushort{r}}^{m-\ushort{r} \times n-\ushort{r}}}{D} \end{bmatrix} [V \; V_\perp]^\tp
\end{array} & \text{if } \norm{B} = \norm{C},\\
\begin{array}{l}
[U \; U_\perp] \begin{bmatrix} A & 0_{\ushort{r} \times n-\ushort{r}} \\ C & \proj{\R_{\le r-\ushort{r}}^{m-\ushort{r} \times n-\ushort{r}}}{D} \end{bmatrix} [V \; V_\perp]^\tp
\end{array} & \text{if } \norm{B} < \norm{C}.
\end{array}\right.
\end{equation}
Moreover,
\begin{equation}
\label{eq:NormProjectionRestrictedTangentConeDeterminantalVariety}
\norm{\proj{\tancone{\R_{\le r}^{m \times n}}{X}}{Z}} \ge
\norm{\proj{\restancone{\R_{\le r}^{m \times n}}{X}}{Z}} \ge
\frac{1}{\sqrt{2}} \norm{\proj{\tancone{\R_{\le r}^{m \times n}}{X}}{Z}}
\end{equation}
and, for all $\breve{Z} \in \proj{\restancone{\R_{\le r}^{m \times n}}{X}}{Z}$,
\begin{equation}
\label{eq:InnerProductApproximateProjectionTangentConeDeterminantalVariety}
\ip{Z}{\breve{Z}} = \norm{\breve{Z}}^2.
\end{equation}
Furthermore,
\begin{equation}
\label{eq:NormProjectionTangentConeDeterminantalVariety}
\norm{Z}
\ge \norm{\proj{\tancone{\R_{\le r}^{m \times n}}{X}}{Z}}
\ge \sqrt{\frac{r-\ushort{r}}{\min\{m,n\}-\ushort{r}}} \norm{Z}.
\end{equation}
\end{proposition}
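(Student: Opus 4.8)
The plan is to exploit the orthogonal invariance of the Frobenius structure and reduce every claim to the four blocks $A, B, C, D$. Throughout, write $T := \tancone{\R_{\le r}^{m \times n}}{X}$ and $\breve{T} := \restancone{\R_{\le r}^{m \times n}}{X}$. Since $[U \; U_\perp] \in \R^{m \times m}$ and $[V \; V_\perp] \in \R^{n \times n}$ are orthogonal, the map $W \mapsto [U \; U_\perp]^\tp W [V \; V_\perp]$ preserves both $\ip{\cdot}{\cdot}$ and $\norm{\cdot}$, so writing a generic element of $\breve{T}$ in block form with blocks $A', B', C', D'$, the squared distance to $Z$ decouples as $\norm{A - A'}^2 + \norm{B - B'}^2 + \norm{C - C'}^2 + \norm{D - D'}^2$. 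I would then minimize each block independently: the unconstrained block is matched exactly ($A' = A$); the block $D$, constrained to $\R_{\le r-\ushort{r}}^{m-\ushort{r} \times n-\ushort{r}}$, is optimally matched by any best rank-$(r-\ushort{r})$ approximation $\widehat{D} \in \proj{\R_{\le r-\ushort{r}}^{m-\ushort{r} \times n-\ushort{r}}}{D}$ (truncated SVD); and the disjunction ``$B' = 0$ or $C' = 0$'' forces a choice, where fixing $C' = 0$ and $B' = B$ costs $\norm{C}^2$ while fixing $B' = 0$ and $C' = C$ costs $\norm{B}^2$. Minimizing keeps the off-diagonal block of larger norm and zeroes the other, which reproduces the three cases of \eqref{eq:ProjectionRestrictedTangentConeDeterminantalVariety}, the tie $\norm{B} = \norm{C}$ yielding the two-element projection set.

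The identity \eqref{eq:InnerProductApproximateProjectionTangentConeDeterminantalVariety} should follow by a direct block computation: in every branch $\ip{Z}{\breve{Z}}$ collapses to $\norm{A}^2 + \max\{\norm{B}^2, \norm{C}^2\} + \ip{D}{\widehat{D}}$, and the truncated-SVD relation $\ip{D}{\widehat{D}} = \norm{\widehat{D}}^2$ turns this into $\norm{\breve{Z}}^2$. For \eqref{eq:NormProjectionRestrictedTangentConeDeterminantalVariety} I would compute the tangent-cone projection in the same coordinates, using the block description of $T$ recalled above: $T$ retains both off-diagonal blocks, so $\norm{\proj{T}{Z}}^2 = \norm{A}^2 + \norm{B}^2 + \norm{C}^2 + \norm{\widehat{D}}^2$, and comparing with the expression for $\norm{\breve{Z}}^2$ gives the single relation
\begin{equation*}
\norm{\proj{T}{Z}}^2 = \norm{\breve{Z}}^2 + \min\{\norm{B}^2, \norm{C}^2\}.
\end{equation*}
The first inequality in \eqref{eq:NormProjectionRestrictedTangentConeDeterminantalVariety} is then immediate, and the second follows from $\norm{\breve{Z}}^2 \ge \max\{\norm{B}^2, \norm{C}^2\} \ge \min\{\norm{B}^2, \norm{C}^2\}$, which yields $2\norm{\breve{Z}}^2 \ge \norm{\proj{T}{Z}}^2$.

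Finally, for \eqref{eq:NormProjectionTangentConeDeterminantalVariety} the upper bound is non-expansiveness: the same block computation gives the analogue $\ip{Z}{\proj{T}{Z}} = \norm{\proj{T}{Z}}^2$ of \eqref{eq:InnerProductApproximateProjectionTangentConeDeterminantalVariety} for $T$, so Cauchy--Schwarz delivers $\norm{\proj{T}{Z}}^2 = \ip{Z}{\proj{T}{Z}} \le \norm{Z}\,\norm{\proj{T}{Z}}$. I expect the lower bound to be the main obstacle, but it should reduce to controlling the block $D$ alone. Denoting the singular values of $D$ by $s_1 \ge \dots \ge s_p \ge 0$ with $p = \min\{m,n\} - \ushort{r}$ and setting $k := r - \ushort{r}$, I have $\norm{\widehat{D}}^2 = \sum_{i=1}^{k} s_i^2$ and $\norm{D}^2 = \sum_{i=1}^{p} s_i^2$.

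Since $(s_i^2)_i$ is nonincreasing, the average of its $k$ largest terms dominates the overall average, i.e.\ $\frac{1}{k}\sum_{i=1}^{k} s_i^2 \ge \frac{1}{p}\sum_{i=1}^{p} s_i^2$, which rearranges to $\norm{\widehat{D}}^2 \ge \frac{k}{p}\norm{D}^2$. As $\frac{k}{p} = \frac{r-\ushort{r}}{\min\{m,n\}-\ushort{r}} \le 1$, each of $\norm{A}^2$, $\norm{B}^2$, $\norm{C}^2$ is at least $\frac{k}{p}$ times itself; summing these with the estimate on $D$ gives
\begin{equation*}
\norm{\proj{T}{Z}}^2 = \norm{A}^2 + \norm{B}^2 + \norm{C}^2 + \norm{\widehat{D}}^2 \ge \frac{k}{p}\,\norm{Z}^2,
\end{equation*}
which is the claimed lower bound. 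The only genuinely nontrivial ingredient is thus the ``top-$k$ average dominates the full average'' inequality; everything else is block bookkeeping enabled by the orthogonal change of coordinates.
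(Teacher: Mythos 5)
Your proposal is correct. For the only claim the paper actually proves—the projection formula \eqref{eq:ProjectionRestrictedTangentConeDeterminantalVariety}—your argument coincides with the paper's: pass to block coordinates via the orthogonal change of basis, note that the squared distance decouples over the four blocks, and minimize blockwise, the disjunction ``$B'=0$ or $C'=0$'' costing $\min\{\norm{B},\norm{C}\}^2$ and the tie $\norm{B}=\norm{C}$ producing the two-element projection set. Where you genuinely depart from the paper is in the remaining three claims: the paper does not prove \eqref{eq:NormProjectionRestrictedTangentConeDeterminantalVariety}, \eqref{eq:InnerProductApproximateProjectionTangentConeDeterminantalVariety}, or \eqref{eq:NormProjectionTangentConeDeterminantalVariety} at all, citing them from Schneider and Uschmajew (2015), \S 3.4 and (3.7), whereas you derive them from scratch in the same block coordinates. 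Your derivations hold up: the truncated-SVD identity $\ip{D}{\widehat{D}} = \norm{\widehat{D}}^2$ gives \eqref{eq:InnerProductApproximateProjectionTangentConeDeterminantalVariety}; the exact relation $\norm{\proj{\tancone{\R_{\le r}^{m \times n}}{X}}{Z}}^2 = \norm{\breve{Z}}^2 + \min\{\norm{B}^2,\norm{C}^2\}$ together with $\norm{\breve{Z}}^2 \ge \max\{\norm{B}^2,\norm{C}^2\}$ yields both inequalities of \eqref{eq:NormProjectionRestrictedTangentConeDeterminantalVariety}; and the lower bound of \eqref{eq:NormProjectionTangentConeDeterminantalVariety} correctly reduces to the fact that, for the nonincreasing squared singular values of $D$, the mean of the largest $r-\ushort{r}$ dominates the mean of all $\min\{m,n\}-\ushort{r}$ of them (the bound being vacuous when $\ushort{r}=r$, a case your argument also covers since then $\widehat{D}=0$). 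What the paper's route buys is brevity, delegating the Schneider--Uschmajew facts to the literature; what yours buys is a self-contained proposition in which all four claims, including the constants $\tfrac{1}{\sqrt{2}}$ and $\sqrt{(r-\ushort{r})/(\min\{m,n\}-\ushort{r})}$, are established by the same elementary block bookkeeping.
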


\begin{proof}
We prove only \eqref{eq:ProjectionRestrictedTangentConeDeterminantalVariety} as \eqref{eq:NormProjectionRestrictedTangentConeDeterminantalVariety} and \eqref{eq:InnerProductApproximateProjectionTangentConeDeterminantalVariety} are given in \cite[\S 3.4]{SchneiderUschmajew2015} and \eqref{eq:NormProjectionTangentConeDeterminantalVariety} in \cite[(3.7)]{SchneiderUschmajew2015}. Let $\breve{Z} \in \restancone{\R_{\le r}^{m \times n}}{X}$. Then,
\begin{equation*}
\breve{Z} = [U \; U_\perp] \begin{bmatrix} \breve{A} & \breve{B} \\ \breve{C} & \breve{D} \end{bmatrix} [V \; V_\perp]^\tp
\end{equation*}
for some $\breve{A} \in \R^{\ushort{r} \times \ushort{r}}$, $\breve{B} \in \R^{\ushort{r} \times n-\ushort{r}}$, $\breve{C} \in \R^{m-\ushort{r} \times \ushort{r}}$, and $\breve{D} \in \R_{\le r-\ushort{r}}^{m-\ushort{r} \times n-\ushort{r}}$ such that $\breve{B} = 0_{\ushort{r} \times n-\ushort{r}}$ or $\breve{C} = 0_{m-\ushort{r} \times \ushort{r}}$. Thus,
\begin{align*}
\norm{Z-\breve{Z}}^2
&= \left\|\begin{bmatrix} A-\breve{A} & B-\breve{B} \\ C-\breve{C} & D-\breve{D} \end{bmatrix}\right\|^2\\
&= \norm{A-\breve{A}}^2 + \norm{B-\breve{B}}^2 + \norm{C-\breve{C}}^2 + \norm{D-\breve{D}}^2\\
&\ge 0 + \min\{\norm{B}, \norm{C}\}^2 + d(D, \R_{\le r-\ushort{r}}^{m-\ushort{r} \times n-\ushort{r}})^2.
\end{align*}
Therefore,
\begin{equation*}
d(Z, \restancone{\R_{\le r}^{m \times n}}{X})^2 \ge \min\{\norm{B}, \norm{C}\}^2 + d(D, \R_{\le r-\ushort{r}}^{m-\ushort{r} \times n-\ushort{r}})^2,
\end{equation*}
a bound which is reached only by \eqref{eq:ProjectionRestrictedTangentConeDeterminantalVariety}.
\end{proof}

In practice, the projection onto $\tancone{\R_{\le r}^{m \times n}}{X}$ can be computed thanks to \cite[Algorithm~2]{SchneiderUschmajew2015}. This does not rely on $U_\perp$ and $V_\perp$, which are huge in the frequently encountered case where $r \ll \min\{m,n\}$. The practical computation of the projection onto $\restancone{\R_{\le r}^{m \times n}}{X}$ is described in \cite[\S 3.4]{SchneiderUschmajew2015} and does not rely on $U_\perp$ and $V_\perp$ either. We discuss the practical implementation and the computational cost in Section~\ref{sec:PracticalImplementationComputationalCost}.

\section{The $\rfd$ map}
\label{sec:RFDmap}
In this section, we analyze Algorithm~\ref{algo:RFDmap}---which corresponds to the iteration map of $\rfd$ \cite[Algorithm~4]{SchneiderUschmajew2015} except that the initial step size for the backtracking procedure is chosen in a given bounded interval---under the assumption that $f$ is differentiable with $\nabla f$ locally Lipschitz continuous. This serves as a basis for the convergence analysis conducted in Section~\ref{sec:ConvergenceAnalysis} since the $\rfd$ map (Algorithm~\ref{algo:RFDmap}) is used as a subroutine by the $\rfdr$ map (Algorithm~\ref{algo:RFDRmap}).
The analysis conducted in this section is to $\rfd$ as the analysis conducted in \cite[\S 3]{OlikierGallivanAbsil2022} is to $\ppgd$, although it is simpler since $\rfd$ does not require any retraction as $X + \alpha G \in \R_{\le r}^{m \times n}$ for all $X \in \R_{\le r}^{m \times n}$, all $\alpha \in (0,\infty)$, and all $G \in \restancone{\R_{\le r}^{m \times n}}{X}$.

\begin{algorithm}[H]
\caption{$\rfd$ map (based on \cite[Algorithm~4]{SchneiderUschmajew2015})}
\label{algo:RFDmap}
\begin{algorithmic}[1]
\Require
$(f, r, \ushort{\alpha}, \bar{\alpha}, \beta, c)$ where $f : \R^{m \times n} \to \R$ is differentiable with $\nabla f$ locally Lipschitz continuous, $r < \min\{m,n\}$ is a positive integer, $0 < \ushort{\alpha} \le \bar{\alpha} < \infty$, and $\beta, c \in (0,1)$.
\Input
$X \in \R_{\le r}^{m \times n}$ such that $\s(X; f, \R_{\le r}^{m \times n}) > 0$.
\Output
a point in $\rfd(X; f, r, \ushort{\alpha}, \bar{\alpha}, \beta, c)$.

\State
Choose $G \in \proj{\restancone{\R_{\le r}^{m \times n}}{X}}{-\nabla f(X)}$ and $\alpha \in [\ushort{\alpha},\bar{\alpha}]$;
\label{algo:RFDmap:FirstLine}
\While
{$f(X + \alpha G) > f(X) - c \, \alpha \norm{G}^2$}
\State
$\alpha \gets \alpha \beta$;
\EndWhile
\State
Return $X + \alpha G$.
\end{algorithmic}
\end{algorithm}

Let us recall that, since $\nabla f$ is locally Lipschitz continuous, for every closed ball $\mathcal{B} \subset \R^{m \times n}$,
\begin{equation*}
\lip_{\mathcal{B}}(\nabla f) := \sup_{\substack{X, Y \in \mathcal{B} \\ X \ne Y}} \frac{\norm{\nabla f(X) - \nabla f(Y)}}{\norm{X-Y}} < \infty,
\end{equation*}
which implies, by \cite[Lemma~1.2.3]{Nesterov2018}, that, for all $X, Y \in \mathcal{B}$,
\begin{equation}
\label{eq:InequalityLipschitzContinuousGradient}
|f(Y) - f(X) - \ip{\nabla f(X)}{Y-X}| \le \frac{\lip_{\mathcal{B}}(\nabla f)}{2} \norm{Y-X}^2.
\end{equation}

\begin{proposition}
\label{prop:RFDmapUpperBoundCost}
Let $X \in \R_{\le r}^{m \times n}$ and $\bar{\alpha} \in (0,\infty)$.
Let $\mathcal{B} \subset \R^{m \times n}$ be a closed ball such that, for all $G \in \proj{\restancone{\R_{\le r}^{m \times n}}{X}}{-\nabla f(X)}$ and all $\alpha \in [0,\bar{\alpha}]$, $X + \alpha G \in \mathcal{B}$; an example of such a ball is $B[X, \bar{\alpha}\s(X; f, \R_{\le r}^{m \times n})]$.
Then, for all $G \in \proj{\restancone{\R_{\le r}^{m \times n}}{X}}{-\nabla f(X)}$ and all $\alpha \in [0,\bar{\alpha}]$,
\begin{equation}
\label{eq:RFDmapUpperBoundCost}
f(X + \alpha G) \le f(X) + \norm{G}^2 \alpha \left(-1+\frac{\lip_\mathcal{B}(\nabla f)}{2}\alpha\right).
\end{equation}
\end{proposition}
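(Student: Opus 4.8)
The plan is to combine the descent-direction identity already established in Proposition~\ref{prop:ProjectionRestrictedTangentConeDeterminantalVariety} with the quadratic upper bound \eqref{eq:InequalityLipschitzContinuousGradient} afforded by the local Lipschitz continuity of $\nabla f$ on the ball $\mathcal{B}$. The whole argument is a direct substitution; there is no genuine obstacle, and the only point requiring a moment's attention is recognizing that the ``descent'' property of $G$ is exactly the identity \eqref{eq:InnerProductApproximateProjectionTangentConeDeterminantalVariety}, so that it need not be re-derived.

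First I would fix $G \in \proj{\restancone{\R_{\le r}^{m \times n}}{X}}{-\nabla f(X)}$ and $\alpha \in [0,\bar{\alpha}]$. The case $\alpha = 0$ is immediate since the right-hand side of \eqref{eq:RFDmapUpperBoundCost} reduces to $f(X)$. For $\alpha > 0$, set $Y := X + \alpha G$; by hypothesis on $\mathcal{B}$, both $X$ and $Y$ lie in $\mathcal{B}$, so \eqref{eq:InequalityLipschitzContinuousGradient} applies. Using $Y - X = \alpha G$ and the homogeneity of the norm, this gives
\begin{equation*}
f(X + \alpha G) \le f(X) + \alpha \ip{\nabla f(X)}{G} + \frac{\lip_\mathcal{B}(\nabla f)}{2}\,\alpha^2 \norm{G}^2.
\end{equation*}

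The key step is to evaluate the linear term. Applying \eqref{eq:InnerProductApproximateProjectionTangentConeDeterminantalVariety} with $Z = -\nabla f(X)$ and $\breve{Z} = G$ yields $\ip{-\nabla f(X)}{G} = \norm{G}^2$, that is, $\ip{\nabla f(X)}{G} = -\norm{G}^2$. Substituting this into the displayed inequality and factoring out $\alpha\norm{G}^2$ produces \eqref{eq:RFDmapUpperBoundCost} exactly.

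Finally, to justify the stated example of an admissible ball, I would invoke the left inequality in \eqref{eq:NormProjectionRestrictedTangentConeDeterminantalVariety} together with the definition \eqref{eq:StationarityMeasure} of the stationarity measure to get $\norm{G} \le \s(X; f, \R_{\le r}^{m \times n})$. Consequently, for every $\alpha \in [0,\bar{\alpha}]$ we have $\norm{(X + \alpha G) - X} = \alpha\norm{G} \le \bar{\alpha}\,\s(X; f, \R_{\le r}^{m \times n})$, so $X + \alpha G \in B[X, \bar{\alpha}\,\s(X; f, \R_{\le r}^{m \times n})]$, confirming that this closed ball satisfies the hypothesis on $\mathcal{B}$.
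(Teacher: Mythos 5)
Your proof is correct and follows essentially the same route as the paper's: both apply the descent lemma \eqref{eq:InequalityLipschitzContinuousGradient} on $\mathcal{B}$, evaluate the linear term via the identity \eqref{eq:InnerProductApproximateProjectionTangentConeDeterminantalVariety}, and justify the example ball by the first inequality of \eqref{eq:NormProjectionRestrictedTangentConeDeterminantalVariety}. Your write-up merely makes explicit a few details the paper leaves implicit (the $\alpha=0$ case and the membership $X, X+\alpha G \in \mathcal{B}$).
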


\begin{proof}
The example $B[X, \bar{\alpha}\s(X; f, \R_{\le r}^{m \times n})]$ is correct by the first inequality of \eqref{eq:NormProjectionRestrictedTangentConeDeterminantalVariety}.
The inequality \eqref{eq:RFDmapUpperBoundCost} is based on \eqref{eq:InequalityLipschitzContinuousGradient} and \eqref{eq:InnerProductApproximateProjectionTangentConeDeterminantalVariety}:
\begin{align*}
f(X+\alpha G)-f(X)
&\le \ip{\nabla f(X)}{(X+\alpha G)-X} + \frac{\lip_\mathcal{B}(\nabla f)}{2} \norm{(X+\alpha G)-X}^2\\
&= -\alpha \norm{G}^2 + \frac{\lip_\mathcal{B}(\nabla f)}{2} \alpha^2 \norm{G}^2.
\end{align*}
\end{proof}

Let us make two remarks concerning the preceding proposition. First, the existence of a ball $\mathcal{B}$ crucially relies on the upper bound $\bar{\alpha}$ required by Algorithm~\ref{algo:RFDmap}. Second, in contrast with \cite[(8)]{OlikierGallivanAbsil2022}, the upper bound \eqref{eq:RFDmapUpperBoundCost} does not depend on the curvature of a fixed-rank manifold. In particular, \eqref{eq:RFDmapUpperBoundCost} does not involve any singular value. This fundamental difference is the reason why $\rfdr$ is apocalypse-free while requiring at most one rank reduction attempt per iteration, whereas $\ppgdr$ can require up to $r$ attempts.

Observe that the $\frac{1}{2}$ factor in the sufficient decrease condition of the following result does not appear in the Armijo condition given in \cite[Corollary~3.2]{OlikierGallivanAbsil2022}.

\begin{corollary}
\label{coro:RFDmapCostDecrease}
Every $\tilde{X} \in \hyperref[algo:RFDmap]{\rfd}(X; f, r, \ushort{\alpha}, \bar{\alpha}, \beta, c)$ satisfies the sufficient decrease condition
\begin{equation}
\label{eq:RFDmapCostDecrease}
f(\tilde{X}) \le f(X) - \frac{1}{2} c \, \alpha \s(X; f, \R_{\le r}^{m \times n})^2
\end{equation}
for some $\alpha \in \big[\min\{\ushort{\alpha}, 2\beta\frac{1-c}{\lip_\mathcal{B}(\nabla f)}\}, \bar{\alpha}\big]$, where $\mathcal{B}$ is any closed ball as in Proposition~\ref{prop:RFDmapUpperBoundCost}. Moreover, the number of iterations in the while loop is at most
\begin{equation}
\label{eq:RFDmapMaxNumIterationsWhile}
\max\left\{0, \left\lfloor\ln\left(\frac{2\beta(1-c)}{\alpha_0\lip_\mathcal{B}(\nabla f)}\right)/\ln \beta\right\rfloor\right\},
\end{equation}
where $\alpha_0$ is the initial step size chosen in line~\ref{algo:RFDmap:FirstLine}.
\end{corollary}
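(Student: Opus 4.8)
The plan is to deduce all three assertions from the cost upper bound \eqref{eq:RFDmapUpperBoundCost} of Proposition~\ref{prop:RFDmapUpperBoundCost}, combined with one link between $\norm{G}$ and the stationarity measure. First I would note that, since $G \in \proj{\restancone{\R_{\le r}^{m \times n}}{X}}{-\nabla f(X)}$, the second inequality of \eqref{eq:NormProjectionRestrictedTangentConeDeterminantalVariety} applied to $Z = -\nabla f(X)$ gives $\norm{G}^2 \ge \frac{1}{2}\s(X; f, \R_{\le r}^{m \times n})^2$; in particular $\norm{G} > 0$ since the input requires $\s(X; f, \R_{\le r}^{m \times n}) > 0$, so every trial step size is well defined and the divisions below are legitimate. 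The sufficient decrease condition \eqref{eq:RFDmapCostDecrease} is then immediate: the map returns $\tilde{X} = X + \alpha G$ only once the while condition fails, that is $f(X + \alpha G) \le f(X) - c\,\alpha\norm{G}^2$, and substituting the lower bound on $\norm{G}^2$ produces the stated factor $\frac{1}{2}$.

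The key technical observation, used for both remaining claims, is that the while condition is forced to fail once the step size is small. Indeed, inserting \eqref{eq:RFDmapUpperBoundCost} shows that any $\alpha \in (0,\bar{\alpha}]$ with $-1 + \frac{\lip_\mathcal{B}(\nabla f)}{2}\alpha \le -c$, equivalently $\alpha \le 2\frac{1-c}{\lip_\mathcal{B}(\nabla f)}$, satisfies $f(X+\alpha G) \le f(X) - c\,\alpha\norm{G}^2$ and hence terminates the loop. I regard this single threshold as the crux of the corollary.

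I would then bound the accepted step size by a two-case argument. If the loop never executes its body, the returned $\alpha$ equals the initial $\alpha_0 \in [\ushort{\alpha},\bar{\alpha}]$, so $\alpha \ge \ushort{\alpha}$. Otherwise the previous trial step size $\alpha/\beta$ was rejected, hence violated the threshold; since $\alpha/\beta \le \alpha_0 \le \bar{\alpha}$, the bound \eqref{eq:RFDmapUpperBoundCost} applies and forces $\alpha/\beta > 2\frac{1-c}{\lip_\mathcal{B}(\nabla f)}$, i.e. $\alpha > 2\beta\frac{1-c}{\lip_\mathcal{B}(\nabla f)}$. Combining the cases yields $\alpha \ge \min\{\ushort{\alpha}, 2\beta\frac{1-c}{\lip_\mathcal{B}(\nabla f)}\}$, while $\alpha \le \alpha_0 \le \bar{\alpha}$ holds because backtracking only shrinks $\alpha$. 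For the iteration count, the trial step sizes are $\alpha_0\beta^k$; if the loop performs $k^\ast \ge 1$ iterations then $\alpha_0\beta^{k^\ast-1}$ was rejected, so $\alpha_0\beta^{k^\ast-1} > 2\frac{1-c}{\lip_\mathcal{B}(\nabla f)}$. Taking logarithms and dividing by $\ln\beta < 0$, which reverses the inequality, gives $k^\ast < \ln\big(2\beta(1-c)/(\alpha_0\lip_\mathcal{B}(\nabla f))\big)/\ln\beta$, so $k^\ast$, being an integer, is at most the floor of the right-hand side; the outer $\max\{0,\cdot\}$ absorbs the case $k^\ast = 0$.

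The only delicate point is the bookkeeping in this last estimate: I must invoke \eqref{eq:RFDmapUpperBoundCost} only at step sizes in $[0,\bar{\alpha}]$ — which is exactly why the admissibility of the penultimate trial $\alpha/\beta \le \alpha_0 \le \bar{\alpha}$ matters — and I must be careful to get the floor and the $\max\{0,\cdot\}$ right when converting the strict logarithmic inequality into an integer bound. Everything else reduces to the single threshold $2\frac{1-c}{\lip_\mathcal{B}(\nabla f)}$ and elementary algebra.
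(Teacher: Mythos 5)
Your proof is correct and takes essentially the same route as the paper's: both arguments hinge on the threshold $\alpha \le 2\frac{1-c}{\lip_\mathcal{B}(\nabla f)}$ extracted from \eqref{eq:RFDmapUpperBoundCost}, the dichotomy between the initial step size being accepted and the penultimate trial $\alpha/\beta$ being rejected, and the second inequality of \eqref{eq:NormProjectionRestrictedTangentConeDeterminantalVariety} to convert $\norm{G}^2$ into $\frac{1}{2}\s(X; f, \R_{\le r}^{m \times n})^2$. You merely spell out the logarithmic bookkeeping behind \eqref{eq:RFDmapMaxNumIterationsWhile}, which the paper compresses into ``follows from the same observation.''
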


\begin{proof}
For all $\alpha \in (0,\infty)$,
\begin{equation*}
f(X) + \norm{G}^2 \alpha \left(-1+\frac{\lip_\mathcal{B}(\nabla f)}{2}\alpha\right)
\le f(X) - c \norm{G}^2 \alpha
\quad \text{iff} \quad \alpha \le 2\frac{1-c}{\lip_\mathcal{B}(\nabla f)}.
\end{equation*}
Since the left-hand side of the first inequality is an upper bound on $f(X + \alpha G)$ for all $\alpha \in (0,\bar{\alpha}]$, the condition for the while loop to stop is necessarily satisfied if $\alpha \in (0,\min\{\bar{\alpha}, 2\frac{1-c}{\lip_\mathcal{B}(\nabla f)}\}]$.
Therefore, either the initial step size chosen in $[\ushort{\alpha},\bar{\alpha}]$ satisfies that condition or the while loop ends with $\alpha$ such that $\frac{\alpha}{\beta} > 2\frac{1-c}{\lip_\mathcal{B}(\nabla f)}$. The sufficient decrease condition \eqref{eq:RFDmapCostDecrease} then follows from the second inequality of \eqref{eq:NormProjectionRestrictedTangentConeDeterminantalVariety}. The upper bound \eqref{eq:RFDmapMaxNumIterationsWhile} follows from the same observation.
\end{proof}

\section{The proposed algorithm}
\label{sec:ProposedAlgorithm}
We now introduce a first-order optimization algorithm on $\R_{\le r}^{m \times n}$ called $\rfdr$. The iteration map of this algorithm, called the $\rfdr$ map, is defined as Algorithm~\ref{algo:RFDRmap}. Given $X \in \R_{\le r}^{m \times n}$ as input, the $\rfdr$ map applies the $\rfd$ map (Algorithm~\ref{algo:RFDmap}) to $X$, thereby producing a point $\tilde{X}$, but also, if $\sigma_r(X)$ is positive but smaller than some treshold $\Delta \in (0,\infty)$, to a projection $\hat{X}$ of $X$ onto $\R_{r-1}^{m \times n}$, then producing a point $\tilde{X}^\mathrm{R}$, and outputs the point among $\tilde{X}$ and $\tilde{X}^\mathrm{R}$ that decreases $f$ the most.

\begin{algorithm}[H]
\caption{$\rfdr$ map}
\label{algo:RFDRmap}
\begin{algorithmic}[1]
\Require
$(f, r, \ushort{\alpha}, \bar{\alpha}, \beta, c, \Delta)$ where $f : \R^{m \times n} \to \R$ is differentiable with $\nabla f$ locally Lipschitz continuous, $r < \min\{m,n\}$ is a positive integer, $0 < \ushort{\alpha} \le \bar{\alpha} < \infty$, $\beta, c \in (0,1)$, and $\Delta \in (0,\infty)$.
\Input
$X \in \R_{\le r}^{m \times n}$ such that $\s(X; f, \R_{\le r}^{m \times n}) > 0$.
\Output
a point in $\rfdr(X; f, r, \ushort{\alpha}, \bar{\alpha}, \beta, c, \Delta)$.

\State
Choose $\tilde{X} \in \hyperref[algo:RFDmap]{\rfd}(X; f, r, \ushort{\alpha}, \bar{\alpha}, \beta, c)$;
\If
{$\sigma_r(X) \in (0,\Delta]$}
\State
Choose $\hat{X} \in \proj{\R_{r-1}^{m \times n}}{X}$;
\State
Choose $\tilde{X}^\mathrm{R} \in \hyperref[algo:RFDmap]{\rfd}(\hat{X}; f, r, \ushort{\alpha}, \bar{\alpha}, \beta, c)$;
\State
Return $Y \in \argmin_{\{\tilde{X}, \tilde{X}^\mathrm{R}\}} f$.
\Else
\State
Return $\tilde{X}$.
\EndIf
\end{algorithmic}
\end{algorithm}

The $\rfdr$ algorithm is defined as Algorithm~\ref{algo:RFDR}. It produces a sequence along which $f$ is strictly decreasing. As announced in the second remark following Proposition~\ref{prop:RFDmapUpperBoundCost}, it turns out that the mere monitoring of the $r$th singular value of the iterates is enough to make the algorithm apocalypse-free, which we prove in the next section.

\begin{algorithm}[H]
\caption{Iterative $\rfdr$}
\label{algo:RFDR}
\begin{algorithmic}[1]
\Require
$(X_0, f, r, \ushort{\alpha}, \bar{\alpha}, \beta, c, \Delta)$ where $X_0 \in \R_{\le r}^{m \times n}$, $f : \R^{m \times n} \to \R$ is differentiable with $\nabla f$ locally Lipschitz continuous, $r < \min\{m,n\}$ is a positive integer, $0 < \ushort{\alpha} \le \bar{\alpha} < \infty$, $\beta, c \in (0,1)$, and $\Delta \in (0,\infty)$.

\State
$i \gets 0$;
\While
{$\s(X_i; f, \R_{\le r}^{m \times n}) > 0$}
\State
Choose $X_{i+1} \in \hyperref[algo:RFDRmap]{\rfdr}(X_i; f, r, \ushort{\alpha}, \bar{\alpha}, \beta, c, \Delta)$;
\State
$i \gets i+1$;
\EndWhile
\end{algorithmic}
\end{algorithm}

\section{Convergence analysis}
\label{sec:ConvergenceAnalysis}
The purpose of this section is to prove Theorem~\ref{thm:RFDRPolakConvergence}. To this end, we use the abstract framework proposed in \cite[\S 1.3]{Polak1971}.
Indeed, the problem considered in this paper can be formulated as follows: find $X \in \R_{\le r}^{m \times n}$ such that $\s(X; f, \R_{\le r}^{m \times n}) = 0$. It is thus a particular instance of \cite[Abstract Problem~1]{Polak1971} where the Banach space is $\R^{m \times n}$, its closed subset is $\R_{\le r}^{m \times n}$, and ``desirable'' means stationary.
Moreover, $\rfdr$ (Algorithm~\ref{algo:RFDR}) is a particular instance of \cite[Algorithm Model~9]{Polak1971} where the ``stop rule'' is $f$ and the ``search function'' is the $\rfdr$ map (Algorithm~\ref{algo:RFDRmap}).

Thus, to prove Theorem~\ref{thm:RFDRPolakConvergence}, it suffices to verify that $\rfdr$ satisfies the two assumptions of \cite[Theorem~10]{Polak1971}, which we do below.

The first assumption is that the objective function $f$ is continuous at each nondesirable point or bounded from below on $\R_{\le r}^{m \times n}$. It is thus satisfied since $f$ is continuous. The following proposition states that the second assumption is satisfied.

\begin{proposition}
\label{prop:RFDRPolak}
For every $\ushort{X} \in \R_{\le r}^{m \times n}$ such that $\s(\ushort{X}; f, \R_{\le r}^{m \times n}) > 0$, there exist $\varepsilon(\ushort{X}), \delta(\ushort{X}) \in (0,\infty)$ such that, for all $X \in B[\ushort{X},\varepsilon(\ushort{X})] \cap \R_{\le r}^{m \times n}$ and all $Y \in \hyperref[algo:RFDRmap]{\rfdr}(X; f, r, \ushort{\alpha}, \bar{\alpha}, \beta, c, \Delta)$,
\begin{equation}
\label{eq:PolakUniformSufficientDecrease}
f(Y) - f(X) \le - \delta(\ushort{X}).
\end{equation}
\end{proposition}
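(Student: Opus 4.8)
The plan is to reduce the whole statement to the one-step sufficient decrease of Corollary~\ref{coro:RFDmapCostDecrease}, made uniform over a neighborhood of $\ushort{X}$, and to tame the single dangerous configuration---a rank-$r$ iterate $X$ close to a lower-rank point $\ushort{X}$ at which $\s$ need not be small---using the rank-reduction branch. I write $\s(\cdot)$ for $\s(\cdot; f, \R_{\le r}^{m\times n})$. First I would fix the uniform constants. Since $\nabla f$ is locally Lipschitz and $\s(X) \le \norm{\nabla f(X)}$ by the first inequality of \eqref{eq:NormProjectionRestrictedTangentConeDeterminantalVariety}, every point explored by an $\rfd$ call launched from a matrix in $B[\ushort{X}, \varepsilon_1 + \Delta]$ remains in the fixed closed ball $\mathcal{B}_0 := B[\ushort{X}, \varepsilon_1 + \Delta + \bar{\alpha}M]$, where $M := \sup_{\mathcal{B}_0}\norm{\nabla f}$ and $\varepsilon_1$ is chosen below. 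Put $L := \lip_{\mathcal{B}_0}(\nabla f)$ and $\alpha_{\min} := \min\{\ushort{\alpha}, 2\beta\tfrac{1-c}{L}\} > 0$. Because $\mathcal{B}_0$ dominates every admissible ball of Proposition~\ref{prop:RFDmapUpperBoundCost}, Corollary~\ref{coro:RFDmapCostDecrease} shows that any $\rfd$ call from $W \in \mathcal{B}_0$ with $\s(W) > 0$ returns $\tilde{W}$ with $f(\tilde{W}) \le f(W) - \tfrac12 c\,\alpha_{\min}\,\s(W)^2$. Crucially, since the output $Y$ of Algorithm~\ref{algo:RFDRmap} lies in $\argmin_{\{\tilde{X}, \tilde{X}^{\mathrm{R}}\}} f$, we always have $f(Y) \le f(\tilde{X})$, and $f(Y) \le f(\tilde{X}^{\mathrm{R}})$ whenever the reduction branch is entered; thus the reduction can only help.

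The heart of the matter, which I would prove first, is the claim: \emph{there exist $\varepsilon_1, \mu > 0$ such that $\s(X) \ge \mu$ for every $X \in B[\ushort{X}, \varepsilon_1] \cap \R_{\le r-1}^{m\times n}$.} This is a lower-semicontinuity statement for $\s$ along rank-deficient sequences, and is where the restriction to rank at most $r-1$ is decisive. I would argue by contradiction: given $X_i \to \ushort{X}$ with $\rank X_i = \ushort{r}' \le r-1$ (constant along a subsequence) and $\s(X_i) \to 0$, pass to a subsequence along which the column and row spaces of $X_i$ converge, and expand $\s(X_i)^2$ in the block form induced by the characterization of $\tancone{\R_{\le r}^{m\times n}}{X_i}$ recalled before Definition~\ref{defi:RestrictedTangentConeDeterminantalVariety}, namely $\norm{A_i}^2 + \norm{B_i}^2 + \norm{C_i}^2 + \sum_{j=1}^{r-\ushort{r}'}\sigma_j(D_i)^2$, where $A_i, B_i, C_i, D_i$ are the blocks of $-\nabla f(X_i)$. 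Passing to the limit, $\s(X_i) \to 0$ forces $A_i, B_i, C_i \to 0$ and, because $r - \ushort{r}' \ge 1$, also $\sigma_1(D_i) \to 0$, hence $D_i \to 0$ entirely. Re-expressing these limits in the coarser rank-$\ushort{r}$ partition of $-\nabla f(\ushort{X})$ and using continuity of the top-singular-values sum gives $\s(\ushort{X}) = 0$, contradicting $\s(\ushort{X}) > 0$. The hypothesis $r - \ushort{r}' \ge 1$ is used essentially: for $\ushort{r}' = r$ the block $D_i$ is unconstrained in $\s$, which is exactly the mechanism of an apocalypse. If $\rank \ushort{X} = r$, the claim is vacuous for small $\varepsilon_1$, as then $B[\ushort{X}, \varepsilon_1] \cap \R_{\le r-1}^{m\times n} = \emptyset$. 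I expect the technical underpinning (convergence of the block decomposition under the subspace limits, with possibly repeated singular values of $\ushort{X}$) to be the one point needing care.

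With the claim in hand I would finish by a short case analysis, shrinking $\varepsilon \le \varepsilon_1$ as needed. If $\rank \ushort{X} = r$, then for small $\varepsilon$ every $X \in B[\ushort{X}, \varepsilon] \cap \R_{\le r}^{m\times n}$ has rank $r$ (rank-$r$ matrices are open in $\R_{\le r}^{m\times n}$) and $\s$ is continuous there, so $\s(X) \ge \s(\ushort{X})/2 =: \mu_1$, and $f(Y) - f(X) \le -\tfrac12 c\,\alpha_{\min}\mu_1^2$ from $f(Y) \le f(\tilde{X})$ alone. If $\rank \ushort{X} = \ushort{r} < r$, I split on $\rank X$. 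When $\rank X \le r-1$, the claim gives $\s(X) \ge \mu$ and no reduction is attempted ($\sigma_r(X) = 0$), so $f(Y) \le f(\tilde{X}) \le f(X) - \tfrac12 c\,\alpha_{\min}\mu^2$. When $\rank X = r$, then $\sigma_r(X) > 0$ and, since $\sigma_r(\ushort{X}) = 0$, $\sigma_r(X) \le \Delta$ for $\varepsilon$ small, so the reduction branch runs with $\hat{X} \in \proj{\R_{r-1}^{m\times n}}{X}$ satisfying $\norm{\hat{X} - X} = \sigma_r(X)$; hence $\hat{X} \in B[\ushort{X}, \varepsilon_1] \cap \R_{\le r-1}^{m\times n}$ and $\s(\hat{X}) \ge \mu$.

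Combining $f(Y) \le f(\tilde{X}^{\mathrm{R}}) \le f(\hat{X}) - \tfrac12 c\,\alpha_{\min}\mu^2$ with the estimate $f(\hat{X}) \le f(X) + M\sigma_r(X) + \tfrac{L}{2}\sigma_r(X)^2$ from \eqref{eq:InequalityLipschitzContinuousGradient}, and shrinking $\varepsilon$ so that $M\sigma_r(X) + \tfrac{L}{2}\sigma_r(X)^2 \le \tfrac14 c\,\alpha_{\min}\mu^2$ (possible because $\sigma_r(X) \to 0$ as $X \to \ushort{X}$), yields $f(Y) - f(X) \le -\tfrac14 c\,\alpha_{\min}\mu^2$. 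Taking $\delta(\ushort{X})$ to be the minimum of the per-case decreases and $\varepsilon(\ushort{X})$ the corresponding radius completes the proof. The main obstacle is the semicontinuity claim of the second paragraph; everything else is bookkeeping of the uniform constants and the elementary remark that $Y$ minimizes $f$ over $\{\tilde{X}, \tilde{X}^{\mathrm{R}}\}$.
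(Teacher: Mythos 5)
Your proof is sound and follows the same skeleton as the paper's: continuity of $\s(\cdot; f, \R_{\le r}^{m \times n})$ on the rank-$r$ manifold handles the case $\rank \ushort{X} = r$, and for $\rank \ushort{X} < r$ a uniform lower bound on $\s$ at nearby rank-deficient points---applied to $X$ itself when $\rank X < r$, or to $\hat{X} \in \proj{\R_{r-1}^{m \times n}}{X}$ when $\rank X = r$---feeds the sufficient decrease of Corollary~\ref{coro:RFDmapCostDecrease}. The genuine difference is how that lower bound is obtained. You prove it qualitatively, by contradiction and compactness; the paper gets it quantitatively in one line from the second inequality of \eqref{eq:NormProjectionTangentConeDeterminantalVariety}, which for $\rank X = \ushort{r}' \le r-1$ gives $\s(X; f, \R_{\le r}^{m \times n}) \ge \norm{\nabla f(X)}/\sqrt{\min\{m,n\}-r+1}$, combined with $\norm{\nabla f(X)} \ge \frac{1}{2}\norm{\nabla f(\ushort{X})}$ near $\ushort{X}$ (its inequality \eqref{eq:RFDRPolakInequalityNormGradient}, from local Lipschitz continuity of $\nabla f$). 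Note that the block expansion you write is precisely the proof of \eqref{eq:NormProjectionTangentConeDeterminantalVariety}, so you are re-deriving, non-quantitatively, an inequality already recorded in Proposition~\ref{prop:ProjectionRestrictedTangentConeDeterminantalVariety}. Moreover, the technical point you flag as delicate (convergence of subspaces, repeated singular values) is a non-issue: since the Frobenius norm is invariant under the orthogonal change of basis, $A_i, B_i, C_i, D_i \to 0$ simply means $\nabla f(X_i) \to 0$, whence $\nabla f(\ushort{X}) = 0$ by continuity and $\s(\ushort{X}) \le \norm{\nabla f(\ushort{X})} = 0$, a contradiction; no constant-rank subsequence or subspace limit is needed. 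Finally, your bridge $f(\hat{X}) \le f(X) + \norm{\nabla f(X)}\,\sigma_r(X) + \frac{L}{2}\sigma_r(X)^2$ via \eqref{eq:InequalityLipschitzContinuousGradient} is a valid, slightly more explicit alternative to the paper's use of plain continuity of $f$ at $\ushort{X}$ (its \eqref{eq:RFDRPolakCostContinuous}); both close the argument after shrinking $\varepsilon$.

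One slip must be repaired: your ball $\mathcal{B}_0 := B[\ushort{X}, \varepsilon_1 + \Delta + \bar{\alpha}M]$ with $M := \sup_{\mathcal{B}_0}\norm{\nabla f}$ is circularly defined ($M$ requires $\mathcal{B}_0$ and vice versa), and such a fixed point need not exist for a general locally Lipschitz gradient. Define instead $M := \sup_{B[\ushort{X}, \varepsilon_1+\Delta]}\norm{\nabla f}$, finite by continuity on a compact set; since the $\rfd$ direction $G$ chosen at a point $W$ satisfies $\norm{G} \le \s(W; f, \R_{\le r}^{m \times n}) \le \norm{\nabla f(W)}$ (first inequalities of \eqref{eq:NormProjectionRestrictedTangentConeDeterminantalVariety} and \eqref{eq:NormProjectionTangentConeDeterminantalVariety}---incidentally, the latter, not the former, is what justifies $\s \le \norm{\nabla f}$ in your first paragraph), every point evaluated by an $\rfd$ call launched from $W \in B[\ushort{X}, \varepsilon_1+\Delta]$ lies in $B[W, \bar{\alpha}M]$, so the ball $B[\ushort{X}, \varepsilon_1 + \Delta + \bar{\alpha}M]$ contains all of them, and the rest of your bookkeeping (the definitions of $L$ and $\alpha_{\min}$, and the final choices of $\varepsilon$ and $\delta(\ushort{X})$) goes through unchanged. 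This is the non-circular analogue of the paper's radius $\bar{\rho}(\ushort{X}) = \Delta + \frac{3}{2}\bar{\alpha}\norm{\nabla f(\ushort{X})}$.
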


\begin{proof}
Let $\ushort{X} \in \R_{\le r}^{m \times n}$ be such that $\s(\ushort{X}; f, \R_{\le r}^{m \times n}) > 0$. This proof constructs $\varepsilon(\ushort{X})$ and $\delta(\ushort{X})$ based on the sufficient decrease condition \eqref{eq:RFDmapCostDecrease} given in Corollary~\ref{coro:RFDmapCostDecrease}. This requires to derive local lower and upper bounds on $\s(\cdot; f, \R_{\le r}^{m \times n})$ around $\ushort{X}$. It first considers the case where $\rank \ushort{X} = r$, in which the construction essentially relies on the continuity of $\s(\cdot; f, \R_{\le r}^{m \times n})$ on $B(\ushort{X},\sigma_r(\ushort{X})) \cap \R_{\le r}^{m \times n}$. Then, it focuses on the case where $\rank \ushort{X} < r$, in which the bounds on $\s(\cdot; f, \R_{\le r}^{m \times n})$ follow from the bounds \eqref{eq:RFDRPolakInequalityNormGradient} on $\nabla f$ thanks to \eqref{eq:NormProjectionTangentConeDeterminantalVariety}. If $\rank X < r$, then the second inequality of \eqref{eq:NormProjectionTangentConeDeterminantalVariety} readily gives a lower bound on $\s(X; f, \R_{\le r}^{m \times n})$. This is not the case if $\rank X = r$, however. This is where the rank reduction mechanism comes into play. It considers a projection $\hat{X}$ of $X$ onto $\R_{r-1}^{m \times n}$, and the second inequality of \eqref{eq:NormProjectionTangentConeDeterminantalVariety} gives a lower bound on $\s(\hat{X}; f, \R_{\le r}^{m \times n})$. The inequality \eqref{eq:PolakUniformSufficientDecrease} is then obtained from \eqref{eq:RFDRPolakCostContinuous} which follows from the continuity of $f$ at $\ushort{X}$.

Let us first consider the case where $\rank \ushort{X} = r$. On $B(\ushort{X},\sigma_r(\ushort{X})) \cap \R_{\le r}^{m \times n} = B(\ushort{X},\sigma_r(\ushort{X})) \cap \R_r^{m \times n}$, $\s(\cdot; f, \R_{\le r}^{m \times n})$ coincides with the norm of the Riemannian gradient of the restriction of $f$ to the smooth manifold $\R_r^{m \times n}$, which is continuous. In particular, there exists $\rho(\ushort{X}) \in (0,\sigma_r(\ushort{X}))$ such that $\s(B[\ushort{X},\rho(\ushort{X})] \cap \R_r^{m \times n}; f, \R_{\le r}^{m \times n}) \subseteq [\frac{1}{2}\s(\ushort{X}; f, \R_{\le r}^{m \times n}), \frac{3}{2}\s(\ushort{X}; f, \R_{\le r}^{m \times n})]$. Let $\bar{\rho}(\ushort{X}) := \rho(\ushort{X}) + \frac{3}{2} \bar{\alpha} \s(\ushort{X}; f, \R_{\le r}^{m \times n})$, $\varepsilon(\ushort{X}) := \rho(\ushort{X})$, and
\begin{equation*}
\delta(\ushort{X}) := \frac{1}{8} c \s(\ushort{X}; f, \R_{\le r}^{m \times n})^2 \min\left\{\ushort{\alpha}, 2\beta\frac{1-c}{\lip_{B[\ushort{X},\bar{\rho}(\ushort{X})]}(\nabla f)}\right\}.
\end{equation*}
Let $X \in B[\ushort{X},\varepsilon(\ushort{X})] \cap \R_{\le r}^{m \times n}$. Then, $B[X, \bar{\alpha} \s(X; f, \R_{\le r}^{m \times n})] \subseteq B[\ushort{X}, \bar{\rho}(\ushort{X})]$. Indeed, for all $Z \in B[X, \bar{\alpha} \s(X; f, \R_{\le r}^{m \times n})]$,
\begin{equation*}
\norm{Z-\ushort{X}}
\le \norm{Z-X} + \norm{X-\ushort{X}}
\le \bar{\alpha} \s(X; f, \R_{\le r}^{m \times n}) + \rho(\ushort{X})
\le \bar{\rho}(\ushort{X}).
\end{equation*}
Therefore, Corollary~\ref{coro:RFDmapCostDecrease} applies and, by \eqref{eq:RFDmapCostDecrease}, for all $\tilde{X} \in \hyperref[algo:RFDmap]{\rfd}(X; f, r, \ushort{\alpha}, \bar{\alpha}, \beta, c)$, 
\begin{equation*}
f(\tilde{X})
\le f(X) - \frac{1}{2} c \s(X; f, \R_{\le r}^{m \times n})^2 \min\left\{\ushort{\alpha}, 2\beta\frac{1-c}{\lip_{B[\ushort{X},\bar{\rho}(\ushort{X})]}(\nabla f)}\right\}
\le f(X) - \delta(\ushort{X}),
\end{equation*}
and thus, for all $Y \in \hyperref[algo:RFDRmap]{\rfdr}(X; f, r, \ushort{\alpha}, \bar{\alpha}, \beta, c, \Delta)$,
\begin{equation*}
f(Y)
\le f(\tilde{X})
\le f(X) - \delta(\ushort{X}).
\end{equation*}
This completes the proof for the case where $\rank \ushort{X} = r$.

Let us now consider the case where $\rank \ushort{X} < r$. By the first inequality of \eqref{eq:NormProjectionTangentConeDeterminantalVariety}, $\norm{\nabla f(\ushort{X})} \ge \s(\ushort{X}; f, \R_{\le r}^{m \times n})$. Let $\bar{\rho}(\ushort{X}) := \Delta + \frac{3}{2} \bar{\alpha} \norm{\nabla f(\ushort{X})}$ and
\begin{equation}
\label{eq:RFDRPolakDelta}
\delta(\ushort{X}) := \frac{1}{24} c \frac{\norm{\nabla f(\ushort{X})}^2}{\min\{m,n\}-r+1} \min\left\{\ushort{\alpha}, 2\beta\frac{1-c}{\lip_{B[\ushort{X},\bar{\rho}(\ushort{X})]}(\nabla f)}\right\}.
\end{equation}
Since $f$ is continuous at $\ushort{X}$, there exists $\rho_f(\ushort{X}) \in (0,\infty)$ such that $f(B[\ushort{X},\rho_f(\ushort{X})]) \subseteq [f(\ushort{X})-\delta(\ushort{X}), f(\ushort{X})+\delta(\ushort{X})]$. Let $\varepsilon(\ushort{X}) := \frac{1}{2} \min\Big\{\Delta, \rho_f(\ushort{X}), \frac{\norm{\nabla f(\ushort{X})}}{2\lip_{B[\ushort{X},\Delta]}(\nabla f)}\Big\}$. Then, for all $Z \in B[\ushort{X}, 2\varepsilon(\ushort{X})]$, since
\begin{align*}
|\norm{\nabla f(Z)}-\norm{\nabla f(\ushort{X})}|
&\le \norm{\nabla f(Z)-\nabla f(\ushort{X})}\\
&\le \lip_{B[\ushort{X},\Delta]}(\nabla f) \norm{Z-\ushort{X}}\\
&\le \lip_{B[\ushort{X},\Delta]}(\nabla f) 2\varepsilon(\ushort{X})\\
&\le \frac{\norm{\nabla f(\ushort{X})}}{2},
\end{align*}
it holds that
\begin{equation}
\label{eq:RFDRPolakInequalityNormGradient}
\frac{1}{2} \norm{\nabla f(\ushort{X})} \le \norm{\nabla f(Z)} \le \frac{3}{2} \norm{\nabla f(\ushort{X})}.
\end{equation}
Let $X \in B[\ushort{X},\varepsilon(\ushort{X})] \cap \R_{\le r}^{m \times n}$.
Let us first consider the case where $\rank X = r$. Then,
\begin{equation*}
0 < \sigma_r(X) = \sigma_r(X) - \sigma_r(\ushort{X}) \le \norm{X-\ushort{X}} \le \varepsilon(\ushort{X}) \le \Delta.
\end{equation*}
Thus, given $X$ as input, the $\rfdr$ map considers $\hat{X} \in \proj{\R_{r-1}^{m \times n}}{X} \subset B[\ushort{X},2\varepsilon(\ushort{X})] \cap \R_{r-1}^{m \times n}$ and $\tilde{X}^\mathrm{R} \in \hyperref[algo:RFDmap]{\rfd}(\hat{X}; f, r, \ushort{\alpha}, \bar{\alpha}, \beta, c)$, where the inclusion holds because
\begin{equation*}
\norm{\hat{X}-\ushort{X}}
\le \norm{\hat{X}-X} + \norm{X-\ushort{X}}
\le \sigma_r(X) + \varepsilon(\ushort{X})
\le 2 \varepsilon(\ushort{X}).
\end{equation*}
As $X, \hat{X} \in B[\ushort{X},\rho_f(\ushort{X})]$, we have
\begin{equation}
\label{eq:RFDRPolakCostContinuous}
f(\hat{X}) \le f(X) + 2\delta(\ushort{X}).
\end{equation}
Since $B[\hat{X}, \bar{\alpha} \s(\hat{X}, f, \R_{\le r}^{m \times n})] \subseteq B[\ushort{X}, \bar{\rho}(\ushort{X})]$,  Corollary~\ref{coro:RFDmapCostDecrease} applies to $\tilde{X}^\mathrm{R}$ with the ball $B[\ushort{X},\bar{\rho}(\ushort{X})]$. The inclusion holds because, for all $Z \in B[\hat{X}, \bar{\alpha} \s(\hat{X}, f,\R_{\le r}^{m \times n})]$,
\begin{equation*}
\norm{Z-\ushort{X}}
\le \norm{Z-\hat{X}} + \norm{\hat{X}-\ushort{X}}
\le \bar{\alpha} \s(\hat{X}, f, \R_{\le r}^{m \times n}) + 2 \varepsilon(\ushort{X})
\le \bar{\alpha} \norm{\nabla f(\hat{X})} + \Delta
\le \bar{\rho}(\ushort{X}),
\end{equation*}
the last inequality following from \eqref{eq:RFDRPolakInequalityNormGradient}. Thus, for all $Y \in \hyperref[algo:RFDRmap]{\rfdr}(X; f, r, \ushort{\alpha}, \bar{\alpha}, \beta, c, \Delta)$,
\begin{align*}
f(Y)
&\le f(\tilde{X}^\mathrm{R})\\
&\le f(\hat{X}) - \frac{1}{2} c \s(\hat{X}; f, \R_{\le r}^{m \times n})^2 \min\left\{\ushort{\alpha}, 2\beta\frac{1-c}{\lip_{B[\ushort{X},\bar{\rho}(\ushort{X})]}(\nabla f)}\right\}\\
&\le f(\hat{X}) - \frac{1}{2} c \frac{\norm{\nabla f(\hat{X})}^2}{\min\{m,n\}-r+1} \min\left\{\ushort{\alpha}, 2\beta\frac{1-c}{\lip_{B[\ushort{X},\bar{\rho}(\ushort{X})]}(\nabla f)}\right\}\\
&\le f(\hat{X}) - 3 \delta(\ushort{X})\\
&\le f(X) - \delta(\ushort{X}),
\end{align*}
where the second inequality follows from \eqref{eq:RFDmapCostDecrease}, the third from the second inequality of \eqref{eq:NormProjectionTangentConeDeterminantalVariety}, the fourth from \eqref{eq:RFDRPolakInequalityNormGradient} and \eqref{eq:RFDRPolakDelta}, and the fifth from \eqref{eq:RFDRPolakCostContinuous}.
Let us now consider the case where $\rank X < r$. Let $\tilde{X} \in \hyperref[algo:RFDmap]{\rfd}(X; f, r, \ushort{\alpha}, \bar{\alpha}, \beta, c)$. Since $B[X, \bar{\alpha} \s(X, f, \R_{\le r}^{m \times n})] \subseteq B[\ushort{X}, \bar{\rho}(\ushort{X})]$, Corollary~\ref{coro:RFDmapCostDecrease} applies to $\tilde{X}$ with $B[\ushort{X},\bar{\rho}(\ushort{X})]$. The inclusion holds because, for all $Z \in B[X, \bar{\alpha} \s(X, f,\R_{\le r}^{m \times n})]$,
\begin{equation*}
\norm{Z-\ushort{X}}
\le \norm{Z-X} + \norm{X-\ushort{X}}
\le \bar{\alpha} \s(X, f, \R_{\le r}^{m \times n}) + \varepsilon(\ushort{X})
< \bar{\alpha} \norm{\nabla f(X)} + \Delta
\le \bar{\rho}(\ushort{X}),
\end{equation*}
the last inequality following from \eqref{eq:RFDRPolakInequalityNormGradient}. Thus, for all $Y \in \hyperref[algo:RFDRmap]{\rfdr}(X; f, r, \ushort{\alpha}, \bar{\alpha}, \beta, c, \Delta)$,
\begin{align*}
f(Y)
&\le f(\tilde{X})\\
&\le f(X) - \frac{1}{2} c \s(X; f, \R_{\le r}^{m \times n})^2 \min\left\{\ushort{\alpha}, 2\beta\frac{1-c}{\lip_{B[\ushort{X},\bar{\rho}(\ushort{X})]}(\nabla f)}\right\}\\
&\le f(X) - \frac{1}{2} c \frac{\norm{\nabla f(X)}^2}{\min\{m,n\}-r+1} \min\left\{\ushort{\alpha}, 2\beta\frac{1-c}{\lip_{B[\ushort{X},\bar{\rho}(\ushort{X})]}(\nabla f)}\right\}\\
&\le f(X) - 3 \delta(\ushort{X})\\
&\le f(X) - \delta(\ushort{X}),
\end{align*}
where the second inequality follows from \eqref{eq:RFDmapCostDecrease}, the third from the second inequality of \eqref{eq:NormProjectionTangentConeDeterminantalVariety}, and the fourth from \eqref{eq:RFDRPolakInequalityNormGradient} and \eqref{eq:RFDRPolakDelta}.
\end{proof}

We have thus proven the following.

\begin{theorem}
\label{thm:RFDRPolakConvergence}
Consider the sequence constructed by $\rfdr$ (Algorithm~\ref{algo:RFDR}). If this sequence is finite, then its last element is stationary, i.e., is a zero of the stationarity measure $\s(\cdot; f, \R_{\le r}^{m \times n})$ defined in \eqref{eq:StationarityMeasure}. If it is infinite, then each of its accumulation points is stationary.
\end{theorem}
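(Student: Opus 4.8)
The plan is to derive the statement directly from the abstract convergence result \cite[Theorem~10]{Polak1971}, using the identifications already in place: $\rfdr$ (Algorithm~\ref{algo:RFDR}) is an instance of \cite[Algorithm Model~9]{Polak1971} with stop rule $f$ and search function the $\rfdr$ map (Algorithm~\ref{algo:RFDRmap}), the feasible set is $\R_{\le r}^{m \times n}$, and ``desirable'' means stationary. Thus the whole proof reduces to checking the two hypotheses of \cite[Theorem~10]{Polak1971} and reading off its conclusion. First I would note that the hypothesis on the stop rule---that $f$ be continuous at every nonstationary point or bounded below on $\R_{\le r}^{m \times n}$---holds trivially because $f$ is continuous on all of $\R^{m \times n}$. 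Then I would observe that the remaining hypothesis, uniform sufficient decrease of the search function near every nonstationary point, is exactly Proposition~\ref{prop:RFDRPolak}: for each $\ushort{X}$ with $\s(\ushort{X}; f, \R_{\le r}^{m \times n}) > 0$ there exist $\varepsilon(\ushort{X}), \delta(\ushort{X}) \in (0,\infty)$ making \eqref{eq:PolakUniformSufficientDecrease} hold on $B[\ushort{X},\varepsilon(\ushort{X})] \cap \R_{\le r}^{m \times n}$. With both hypotheses in hand, \cite[Theorem~10]{Polak1971} delivers the asserted dichotomy.

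To keep the argument self-contained I would also sketch why the dichotomy follows from these two facts. In the finite case the while loop of Algorithm~\ref{algo:RFDR} can only have terminated because the last iterate $X_i$ satisfies $\s(X_i; f, \R_{\le r}^{m \times n}) = 0$, so stationarity is immediate. In the infinite case I would argue by contradiction: if some accumulation point $X^*$ were nonstationary, I would pick a subsequence $X_{i_k} \to X^*$; for $k$ large it lies in $B[X^*,\varepsilon(X^*)] \cap \R_{\le r}^{m \times n}$, so Proposition~\ref{prop:RFDRPolak} forces $f(X_{i_k+1}) \le f(X_{i_k}) - \delta(X^*)$, and since $f$ is nonincreasing along the whole sequence (each $\rfd$ substep already decreases $f$ by Corollary~\ref{coro:RFDmapCostDecrease}), this would send $f(X_{i_k}) \to -\infty$, contradicting $f(X_{i_k}) \to f(X^*)$ by continuity of $f$.

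I do not expect any obstacle in this theorem itself; the real work has already been absorbed into Proposition~\ref{prop:RFDRPolak}. The hard part there---and the reason the rank reduction mechanism is needed---is the regime $\rank \ushort{X} < r$, where $\s(\cdot; f, \R_{\le r}^{m \times n})$ may fail to be lower semicontinuous, so that no purely continuity-based lower bound on the stationarity measure is available near $\ushort{X}$; the rank-reduction branch of the $\rfdr$ map supplies the missing uniform decrease via \eqref{eq:NormProjectionTangentConeDeterminantalVariety}. Granting that proposition, the present theorem is a short bookkeeping step, and I would confine its proof to the verification above.
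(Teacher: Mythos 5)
Your proposal is correct and takes essentially the same route as the paper: the paper's proof of Theorem~\ref{thm:RFDRPolakConvergence} consists precisely of the verification, carried out just before the theorem statement, that $f$ (being continuous) satisfies the stop-rule hypothesis and that Proposition~\ref{prop:RFDRPolak} supplies the uniform-sufficient-decrease hypothesis of \cite[Theorem~10]{Polak1971}, after which the conclusion is read off. Your additional self-contained sketch of the dichotomy (finite case via the while-loop termination condition, infinite case via monotonicity of $f$ plus the uniform decrease $\delta(X^*)$ along a convergent subsequence) is a correct reconstruction of Polak's argument but is not needed once \cite[Theorem~10]{Polak1971} is invoked.
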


\begin{corollary}
\label{coro:RFDRPolakConvergence}
Let $(X_i)_{i \in \N}$ be a sequence produced by $\rfdr$ (Algorithm~\ref{algo:RFDR}).
The sequence has at least one accumulation point if and only if $\liminf_{i \to \infty} \norm{X_i} < \infty$. For every convergent subsequence $(X_{i_k})_{k \in \N}$, $\lim_{k \to \infty} \s(X_{i_k}; f, \R_{\le r}^{m \times n}) = 0$.
If $(X_i)_{i \in \N}$ is bounded, which is the case notably if the sublevel set $\{X \in \R_{\le r}^{m \times n} \mid f(X) \le f(X_0)\}$ is bounded, then $\lim_{i \to \infty} \s(X_i; f, \R_{\le r}^{m \times n}) = 0$, and all accumulation points have the same image by $f$.
\end{corollary}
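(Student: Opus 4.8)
The plan is to treat the three assertions separately, the middle one carrying the real content. The equivalence in the first sentence is a routine consequence of the Bolzano--Weierstrass theorem in the finite-dimensional space $\R^{m \times n}$: if $\liminf_{i \to \infty} \norm{X_i} < \infty$, some subsequence is bounded and hence admits a convergent sub-subsequence, whose limit is an accumulation point; conversely, the existence of a convergent subsequence forces $\liminf_{i \to \infty} \norm{X_i}$ to be at most the norm of its limit. The crux is that I cannot obtain $\s(X_{i_k}; f, \R_{\le r}^{m \times n}) \to 0$ merely by passing to the limit, because $\s(\cdot; f, \R_{\le r}^{m \times n})$ is not continuous on $\R_{\le r}^{m \times n}$ (this discontinuity is exactly what makes apocalypses possible); instead I would extract it from the sufficient decrease condition of Corollary~\ref{coro:RFDmapCostDecrease} together with the monotonicity of the cost.

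For the second assertion, fix a convergent subsequence $X_{i_k} \to \bar{X}$. Every $\rfdr$ step satisfies $f(X_{i+1}) \le f(\tilde{X}_i)$, where $\tilde{X}_i \in \rfd(X_i; f, r, \ushort{\alpha}, \bar{\alpha}, \beta, c)$, so Corollary~\ref{coro:RFDmapCostDecrease} gives $f(X_i) - f(X_{i+1}) \ge \frac{1}{2} c \, \alpha_i \, \s(X_i; f, \R_{\le r}^{m \times n})^2 > 0$ for some admissible $\alpha_i$; thus $(f(X_i))_{i \in \N}$ is nonincreasing, and since it has the convergent subsequence $f(X_{i_k}) \to f(\bar{X})$, the whole sequence $(f(X_i))_{i \in \N}$ converges to $f(\bar{X})$, whence $f(X_i) - f(X_{i+1}) \to 0$. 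The key remaining point is a \emph{uniform} positive lower bound on the step sizes. I would obtain it by noting that $\s(X_{i_k}; f, \R_{\le r}^{m \times n}) \le \norm{\nabla f(X_{i_k})}$ by the first inequality of \eqref{eq:NormProjectionTangentConeDeterminantalVariety}, so this quantity stays bounded near $\bar{X}$ by continuity of $\nabla f$; consequently all the balls $B[X_{i_k}, \bar{\alpha} \s(X_{i_k}; f, \R_{\le r}^{m \times n})]$ admissible in Proposition~\ref{prop:RFDmapUpperBoundCost} lie, for $k$ large, in one fixed closed ball $\mathcal{B}$, which is therefore a valid ball for each such $X_{i_k}$ and yields a single Lipschitz constant $L := \lip_{\mathcal{B}}(\nabla f) < \infty$ and hence $\alpha_{i_k} \ge \min\{\ushort{\alpha}, 2\beta \frac{1-c}{L}\} =: \alpha_{\min} > 0$. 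Combining, $\frac{1}{2} c \, \alpha_{\min} \, \s(X_{i_k}; f, \R_{\le r}^{m \times n})^2 \le f(X_{i_k}) - f(X_{i_k + 1}) \to 0$, so $\s(X_{i_k}; f, \R_{\le r}^{m \times n}) \to 0$.

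For the third assertion, if $(X_i)_{i \in \N}$ is bounded the same estimate applies globally: all iterates lie in a fixed compact set, $\s(X_i; f, \R_{\le r}^{m \times n}) \le \norm{\nabla f(X_i)}$ is bounded, so every admissible ball is contained in one fixed $\mathcal{B}$, yielding a uniform $\alpha_{\min} > 0$; moreover $(f(X_i))_{i \in \N}$ is nonincreasing and bounded below by continuity of $f$ on the compact closure, hence converges, so $f(X_i) - f(X_{i+1}) \to 0$ and the uniform decrease bound forces $\s(X_i; f, \R_{\le r}^{m \times n}) \to 0$ for the entire sequence. Since $f(X_i)$ converges to some $f^\star$ and $f$ is continuous, every accumulation point $\bar{X} = \lim_k X_{i_k}$ satisfies $f(\bar{X}) = f^\star$, so all accumulation points share the same image by $f$. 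Finally, for the parenthetical remark, the while-loop condition guarantees $\s(X_i; f, \R_{\le r}^{m \times n}) > 0$ at every iterate, so the decrease is strict and $f(X_i) \le f(X_0)$ for all $i$; hence the whole sequence lies in $\{X \in \R_{\le r}^{m \times n} \mid f(X) \le f(X_0)\}$, and boundedness of that sublevel set implies boundedness of the sequence.

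I expect the main obstacle to be the discontinuity of the stationarity measure, which rules out any direct limiting argument and which I circumvent via the telescoped cost decrease; within that, the technical heart is securing the uniform step-size lower bound, that is, a single Lipschitz constant valid along the (sub)sequence, which is precisely what boundedness delivers.
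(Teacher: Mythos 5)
Your proof is correct, but it takes a genuinely different route from the paper's. The paper's own proof consists of three citations: the ``if and only if'' is dismissed as classical (your Bolzano--Weierstrass argument); the two limits are obtained by combining Theorem~\ref{thm:RFDRPolakConvergence} (every accumulation point is stationary) with \cite[Proposition~2.12]{OlikierGallivanAbsil2022}, a one-sided continuity property of the stationarity measure which yields $\limsup_{k \to \infty} \s(X_{i_k}; f, \R_{\le r}^{m \times n}) \le \s(\bar{X}; f, \R_{\le r}^{m \times n}) = 0$ along any convergent subsequence; and the final claim is Polak's monotonicity argument, essentially what you wrote. You instead bypass both Theorem~\ref{thm:RFDRPolakConvergence} and the semicontinuity result, extracting the two limits directly from Corollary~\ref{coro:RFDmapCostDecrease}: monotone decrease of $(f(X_i))_{i \in \N}$, a uniform step-size lower bound from a single Lipschitz ball, and telescoping. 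Your uniform bound is legitimate because Corollary~\ref{coro:RFDmapCostDecrease} holds for \emph{any} admissible ball, and the first inequality of \eqref{eq:NormProjectionTangentConeDeterminantalVariety} ($\s \le \norm{\nabla f}$) indeed traps every ball $B[X_{i_k}, \bar{\alpha}\s(X_{i_k}; f, \R_{\le r}^{m \times n})]$ inside one fixed ball for $k$ large. What your route buys: it is self-contained (no appeal to \cite{OlikierGallivanAbsil2022} or \cite{Polak1971}), and it exposes that this corollary, unlike Theorem~\ref{thm:RFDRPolakConvergence}, does not depend on the rank-reduction mechanism at all---the same argument applies verbatim to plain $\rfd$, which is consistent with apocalypses, where $\s$ vanishes along the sequence while the limit point is nonstationary. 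What the paper's route buys: brevity, and the limits appear as consequences of the strictly stronger fact that the accumulation points themselves are stationary. One correction to your commentary: your claim that the limit ``cannot be obtained by passing to the limit'' because $\s(\cdot; f, \R_{\le r}^{m \times n})$ is discontinuous is only half right---what fails (and what apocalypses exploit) is \emph{lower} semicontinuity, whereas the upper-semicontinuity-type property invoked by the paper does hold on $\R_{\le r}^{m \times n}$ and licenses exactly that limiting argument once Theorem~\ref{thm:RFDRPolakConvergence} is in hand.
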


\begin{proof}
The ``if and only if'' statement is a classical result. The two limits follow from \cite[Proposition~2.12]{OlikierGallivanAbsil2022}. The final claim follows from the argument given in the proof of \cite[Theorem~65]{Polak1971}.
\end{proof}

\section{Practical implementation and computational cost}
\label{sec:PracticalImplementationComputationalCost}
In this section, we compare the computational cost of the $\ppgdr$ map \cite[Algorithm~3]{OlikierGallivanAbsil2022} with the one of the $\rfdr$ map (Algorithm~\ref{algo:RFDRmap}). The comparison is conducted based on detailed implementations of these algorithms involving only evaluations of $f$ and $\nabla f$ and some operations from linear algebra:
\begin{enumerate}
\item matrix multiplication;
\item thin QR factorization with column pivoting (see, e.g., \cite[Algorithm~5.4.1]{GolubVanLoan});
\item \emph{small scale} (truncated) SVD, i.e., the smallest dimension of the matrix to decompose is at most $2r$;
\item \emph{large scale} truncated SVD, i.e., truncated SVD that is not small scale.
\end{enumerate}
Let us recall that, in this list, only the (truncated) SVD cannot be executed within a finite number of arithmetic operations.

Detailed implementations of the $\ppgdr$ and $\rfdr$ maps are provided as Algorithms~\ref{algo:DetailedP2GDRmap} and \ref{algo:DetailedRFDRmap}, respectively. They both use Algorithm~\ref{algo:DetailedMapsZeroInput}, which is a detailed implementation of the $\ppgd$, $\ppgdr$, $\rfd$, and $\rfdr$ maps in the case where their input is $0_{m \times n}$, as a subroutine. They also rely on Algorithms~\ref{algo:DetailedP2GDmap} and \ref{algo:DetailedRFDmapSVDs}, which are detailed implementations of the $\ppgd$ and $\rfd$ maps, respectively. All those algorithms work with low-rank representations of the involved matrices as much as possible. In the rest of the section, we analyze the computational cost of the five algorithms mentioned in this paragraph.

Algorithm~\ref{algo:DetailedMapsZeroInput} involves one evaluation of $\nabla f$ (in line~\ref{algo:DetailedMapsZeroInput:TruncatedSVD}), at most
\begin{equation}
\label{eq:MaxNumIterationsDetailedMapsZeroInput}
1+\max\left\{0, \left\lfloor\ln\bigg(\frac{2\beta(1-c)}{\alpha\lip_{B[0_{m \times n},\alpha\s(0_{m \times n}; f, \R_{\le r}^{m \times n})]}(\nabla f)}\bigg)/\ln \beta\right\rfloor\right\}
\end{equation}
evaluations of $f$ (in the while loop), and one large scale truncated SVD (in line~\ref{algo:DetailedMapsZeroInput:TruncatedSVD}). The upper bound \eqref{eq:MaxNumIterationsDetailedMapsZeroInput} follows from \eqref{eq:RFDmapMaxNumIterationsWhile}.

\begin{algorithm}[H]
\caption{Detailed $\ppgd$, $\ppgdr$, $\rfd$, and $\rfdr$ maps given zero as input}
\label{algo:DetailedMapsZeroInput}
\begin{algorithmic}[1]
\Require
$(f, r, \alpha, \beta, c)$ where $f : \R^{m \times n} \to \R$ is differentiable with $\nabla f$ locally Lipschitz continuous, $r < \min\{m,n\}$ is a positive integer, $\alpha \in (0,\infty)$, $\beta, c \in (0,1)$, and $\s(0_{m \times n}; f, \R_{\le r}^{m \times n}) > 0$.
\Output
$(\tilde{U}, \tilde{\Sigma}, \tilde{V})$ where $\tilde{U}\tilde{\Sigma}\tilde{V}^\tp \in \R_{\tilde{r}}^{m \times n}$ is an SVD, $\tilde{r} \in \{1, \dots, r\}$, and $\tilde{U}\tilde{\Sigma}\tilde{V}^\tp \in \ppgd(0_{m \times n}; f, r, \alpha, \alpha, \beta, c) = \hyperref[algo:RFDmap]{\rfd}(0_{m \times n}; f, r, \alpha, \alpha, \beta, c) = \ppgdr(0_{m \times n}; f, r, \alpha, \alpha, \beta, c, \Delta) = \hyperref[algo:RFDRmap]{\rfdr}(0_{m \times n}; f, r, \alpha, \alpha, \beta, c, \Delta)$.

\State
Compute a truncated SVD $\tilde{U} \Sigma \tilde{V}^\tp \in \R_{\le r}^{m \times n}$ of $-\nabla f(0_{m \times n})$;
\label{algo:DetailedMapsZeroInput:TruncatedSVD}
\State
$s \gets \norm{\Sigma}^2$;
\While
{$f(\alpha \tilde{U} \Sigma \tilde{V}^\tp) > f(0_{m \times n}) - c \, \alpha \, s$}
\State
$\alpha \gets \alpha \beta$;
\EndWhile
\State
$\tilde{\Sigma} \gets \alpha \Sigma$;
\end{algorithmic}
\end{algorithm}

\begin{algorithm}[H]
\caption{Detailed $\ppgd$ map (based on \cite[Algorithms~2 and 3]{SchneiderUschmajew2015})}
\label{algo:DetailedP2GDmap}
\begin{algorithmic}[1]
\Require
$(f, r, \alpha, \beta, c)$ where $f : \R^{m \times n} \to \R$ is differentiable with $\nabla f$ locally Lipschitz continuous, $r < \min\{m,n\}$ is a positive integer, $\alpha \in (0,\infty)$, and $\beta, c \in (0,1)$.
\Input
$(U, \Sigma, V)$ where $U \Sigma V^\tp \in \R_{\ushort{r}}^{m \times n}$ is an SVD, $\ushort{r} \in \{1, \dots, r\}$, and $\s(U \Sigma V^\tp; f, \R_{\le r}^{m \times n}) > 0$.
\Output
$(\tilde{U}, \tilde{\Sigma}, \tilde{V})$ where $\tilde{U}\tilde{\Sigma}\tilde{V}^\tp \in \R_{\tilde{r}}^{m \times n}$ is an SVD, $\tilde{r} \in \{1, \dots, r\}$, and $\tilde{U}\tilde{\Sigma}\tilde{V}^\tp \in \ppgd(U \Sigma V^\tp; f, r, \alpha, \alpha, \beta, c)$.

\State
$\bar{G} \gets -\nabla f(U \Sigma V^\tp)$;
$\hat{G}_1 \gets U^\tp \bar{G}$;
$\hat{G}_2 \gets \bar{G} V$;
$\hat{G} \gets \hat{G}_1 V$;
$\hat{\hat{G}}_1 \gets U \hat{G}$;
$\hat{\hat{G}}_2 \gets \hat{G} V^\tp$;
\label{algo:DetailedP2GDmap:GradientEvaluation}
\State
Compute QR factorizations with column pivoting $\hat{G}_2-\hat{\hat{G}}_1 = U_\perp R_1$ and $\hat{G}_1^\tp-\hat{\hat{G}}_2^\tp = V_\perp R_2$ where $U_\perp \in \st(r_1,m)$, $V_\perp \in \st(r_2,n)$, $R_1 \in \R_*^{r_1 \times \ushort{r}}$, and $R_2 \in \R_*^{r_2 \times \ushort{r}}$;
\label{algo:DetailedP2GDmap:QRColumnPivoting1}
\State
$s \gets \norm{\hat{G}}^2 + \norm{R_1}^2 + \norm{R_2}^2$;
\If
{$\ushort{r} = r$}
\State
Compute a truncated SVD $\hat{U} \hat{\Sigma} \hat{V}^\tp \in \R_{\le r}^{\ushort{r}+r_1 \times \ushort{r}+r_2}$ of $\begin{bmatrix} \Sigma+\alpha\hat{G} & \alpha R_2^\tp \\ \alpha R_1 & 0_{r_1 \times r_2} \end{bmatrix}$;
\label{algo:DetailedP2GDmap:SmallTruncatedSVD1}
\While
{$f([U \; U_\perp] \hat{U} \hat{\Sigma} \hat{V}^\tp [V \; V_\perp]^\tp) > f(U \Sigma V^\tp) - c \, \alpha \, s$}
\State
$\alpha \gets \alpha \beta$;
\State
Compute a truncated SVD $\hat{U} \hat{\Sigma} \hat{V}^\tp \in \R_{\le r}^{\ushort{r}+r_1 \times \ushort{r}+r_2}$ of $\begin{bmatrix} \Sigma+\alpha\hat{G} & \alpha R_2^\tp \\ \alpha R_1 & 0_{r_1 \times r_2} \end{bmatrix}$;
\label{algo:DetailedP2GDmap:SmallTruncatedSVD2}
\EndWhile
\State
$\tilde{\Sigma} \gets \hat{\Sigma}$; $\tilde{U} \gets [U \; U_\perp] \hat{U}$; $\tilde{V} \gets [V \; V_\perp] \hat{V}$;
\label{algo:DetailedP2GDmap:rend}
\Else
\State
$\tilde{G} \gets \bar{G}-U\hat{G}_1+(\hat{\hat{G}}_1-\hat{G}_2)V^\tp$;
\If
{$\tilde{G} = 0_{m \times n}$}
\State
Repeat lines \ref{algo:DetailedP2GDmap:SmallTruncatedSVD1} to \ref{algo:DetailedP2GDmap:rend};
\Else
\State
Compute a truncated SVD $\ushort{U}_\perp \ushort{\Sigma} \ushort{V}_\perp^\tp \in \R_{\le r-\ushort{r}}^{m \times n}$ of $\tilde{G} \in \R_{\le \min\{m,n\}-\ushort{r}}^{m \times n}$;
\label{algo:DetailedP2GDmap:TruncatedSVD}
\State
$s \gets s+\norm{\ushort{\Sigma}}^2$; $r_0 \gets \rank \ushort{\Sigma}$;
\State
Compute QR factorizations with column pivoting $[U_\perp \; \ushort{U}_\perp] = [U_\perp \; \bar{U}_\perp] \begin{bmatrix} I_{r_1} & R_{1,1} \\ 0_{r_3 \times r_1} & R_{1,2} \end{bmatrix}$ and $[V_\perp \; \ushort{V}_\perp] = [V_\perp \; \bar{V}_\perp] \begin{bmatrix} I_{r_2} & R_{2,1} \\ 0_{r_4 \times r_2} & R_{2,2} \end{bmatrix}$ where $\bar{U}_\perp \in \st(r_3,m)$, $\bar{V}_\perp \in \st(r_4,n)$, $R_{1,1} \in \R^{r_1 \times r_0}$, $R_{1,2} \in \R_*^{r_3 \times r_0}$, $R_{2,1} \in \R^{r_2 \times r_0}$, and $R_{2,2} \in \R_*^{r_4 \times r_0}$;
\label{algo:DetailedP2GDmap:QRColumnPivoting2}
\State
Compute a truncated SVD $\hat{U} \hat{\Sigma} \hat{V}^\tp \in \R_{\le r}^{\ushort{r}+r_1+r_3 \times \ushort{r}+r_2+r_4}$ of $\begin{bmatrix} \Sigma+\alpha\hat{G} & \alpha R_2^\tp & 0_{\ushort{r} \times r_4} \\ \alpha R_1 & \alpha R_{1,1} \ushort{\Sigma} R_{2,1}^\tp & \alpha R_{1,1} \ushort{\Sigma} R_{2,2}^\tp \\ 0_{r_3 \times \ushort{r}} & \alpha R_{1,2} \ushort{\Sigma} R_{2,1}^\tp & \alpha R_{1,2} \ushort{\Sigma} R_{2,2}^\tp \end{bmatrix}$;
\label{algo:DetailedP2GDmap:SmallTruncatedSVD3}
\While
{$f([U \; U_\perp \; \bar{U}_\perp] \hat{U} \hat{\Sigma} \hat{V}^\tp [V \; V_\perp \; \bar{V}_\perp]^\tp) > f(U \Sigma V^\tp) - c \, \alpha \, s$}
\State
$\alpha \gets \alpha \beta$;
\State
Compute a truncated SVD $\hat{U} \hat{\Sigma} \hat{V}^\tp \in \R_{\le r}^{\ushort{r}+r_1+r_3 \times \ushort{r}+r_2+r_4}$ of $\begin{bmatrix} \Sigma+\alpha\hat{G} & \alpha R_2^\tp & 0_{\ushort{r} \times r_4} \\ \alpha R_1 & \alpha R_{1,1} \ushort{\Sigma} R_{2,1}^\tp & \alpha R_{1,1} \ushort{\Sigma} R_{2,2}^\tp \\ 0_{r_3 \times \ushort{r}} & \alpha R_{1,2} \ushort{\Sigma} R_{2,1}^\tp & \alpha R_{1,2} \ushort{\Sigma} R_{2,2}^\tp \end{bmatrix}$;
\label{algo:DetailedP2GDmap:SmallTruncatedSVD4}
\EndWhile
\State
$\tilde{\Sigma} \gets \hat{\Sigma}$; $\tilde{U} \gets [U \; U_\perp \; \bar{U}_\perp] \hat{U}$; $\tilde{V} \gets [V \; V_\perp \; \bar{V}_\perp] \hat{V}$;
\EndIf
\EndIf
\end{algorithmic}
\end{algorithm}

Algorithm~\ref{algo:DetailedP2GDmap} involves one evaluation of $\nabla f$ (in line~\ref{algo:DetailedP2GDmap:GradientEvaluation}), at most
\begin{equation}
\label{eq:MaxNumIterationsDetailedP2GDmap}
1+\max\left\{0, \left\lfloor\ln\bigg(\frac{\beta(1-c)}{\alpha\kappa_{B[X,\alpha\s(X; f, \R_{\le r}^{m \times n})]}(X; f, \alpha)}\bigg)/\ln \beta\right\rfloor\right\}
\end{equation}
evaluations of $f$ and small scale truncated SVDs (in the while loop that is executed), several matrix multiplications, at most four QR factorizations with column pivoting (in lines~\ref{algo:DetailedP2GDmap:QRColumnPivoting1} and \ref{algo:DetailedP2GDmap:QRColumnPivoting2}), and at most one large scale truncated SVD (in line~\ref{algo:DetailedP2GDmap:TruncatedSVD}). The upper bound \eqref{eq:MaxNumIterationsDetailedP2GDmap} follows from \cite[Corollary~3.2]{OlikierGallivanAbsil2022}.
Let us point out that, if $\hat{G}_1 = \hat{\hat{G}}_2$ or $\hat{G}_2 = \hat{\hat{G}}_1$ in line~\ref{algo:DetailedP2GDmap:QRColumnPivoting1}, then $r_1$ or $r_2$ is zero and empty matrices appear in the algorithm.

Based on the analysis of Algorithms~\ref{algo:DetailedMapsZeroInput} and \ref{algo:DetailedP2GDmap}, we analyze Algorithm~\ref{algo:DetailedP2GDRmap}. As mentioned in Section~\ref{sec:Introduction}, in the worst case, i.e., if the input has rank $r$ and its largest singular value is smaller than or equal to $\Delta$, then Algorithm~\ref{algo:DetailedP2GDRmap} calls Algorithm~\ref{algo:DetailedP2GDmap} $r$ times and Algorithm~\ref{algo:DetailedMapsZeroInput} once. Besides matrix multiplications, this requires $r+1$ evaluations $\nabla f$, at most \eqref{eq:MaxNumIterationsDetailedMapsZeroInput} plus $r$ times \eqref{eq:MaxNumIterationsDetailedP2GDmap} evaluations of $f$, at most $4r-2$ QR factorizations with column pivoting, at most $r$ times \eqref{eq:MaxNumIterationsDetailedP2GDmap} small scale truncated SVDs, and $r$ large scale truncated SVDs.

\begin{algorithm}[H]
\caption{Detailed $\ppgdr$ map}
\label{algo:DetailedP2GDRmap}
\begin{algorithmic}[1]
\Require
$(f, r, \alpha, \beta, c, \Delta)$ where $f : \R^{m \times n} \to \R$ is differentiable with $\nabla f$ locally Lipschitz continuous, $r < \min\{m,n\}$ is a positive integer, $\alpha \in (0,\infty)$, $\beta, c \in (0,1)$, and $\Delta \in (0,\infty)$.
\Input
$(U, \Sigma, V)$ where $U \Sigma V^\tp \in \R_{\ushort{r}}^{m \times n}$ is an SVD, $\ushort{r} \in \{1, \dots, r\}$, and $\s(U \Sigma V^\tp; f, \R_{\le r}^{m \times n}) > 0$.
\Output
$(\tilde{U}, \tilde{\Sigma}, \tilde{V})$ where $\tilde{U}\tilde{\Sigma}\tilde{V}^\tp \in \R_{\tilde{r}}^{m \times n}$ is an SVD, $\tilde{r} \in \{1, \dots, r\}$, and $\tilde{U}\tilde{\Sigma}\tilde{V}^\tp \in \ppgdr(U \Sigma V^\tp; f, r, \alpha, \alpha, \beta, c, \Delta)$.

\State
$(\tilde{U}, \tilde{\Sigma}, \tilde{V}) \gets \text{Algorithm~\ref{algo:DetailedP2GDmap}}(\Sigma, U, V; f, r, \alpha, \beta, c)$;
\State
$r_\Delta \gets |\{j \in \{1, \dots, \ushort{r}\} \mid \Sigma(j,j) > \Delta\}|$;
\For
{$j \in \{1, \dots, \ushort{r}-r_\Delta\}$}
\If
{$j < \ushort{r}$}
\State
$(\tilde{U}^\mathrm{R}, \tilde{\Sigma}^\mathrm{R}, \tilde{V}^\mathrm{R}) \gets \text{Algorithm~\ref{algo:DetailedP2GDmap}}(\Sigma(1:\ushort{r}-j, 1:\ushort{r}-j), U(:, 1:\ushort{r}-j), V(:, 1:\ushort{r}-j); f, r, \alpha, \beta, c)$;
\Else
\State
$(\tilde{U}^\mathrm{R}, \tilde{\Sigma}^\mathrm{R}, \tilde{V}^\mathrm{R}) \gets \text{Algorithm~\ref{algo:DetailedMapsZeroInput}}(f, r, \alpha, \beta, c)$;
\EndIf
\If
{$f(\tilde{U}^\mathrm{R} \tilde{\Sigma}^\mathrm{R} (\tilde{V}^\mathrm{R})^\tp) < f(\tilde{U} \tilde{\Sigma} \tilde{V}^\tp)$}
\State
$(\tilde{U}, \tilde{\Sigma}, \tilde{V}) \gets (\tilde{U}^\mathrm{R}, \tilde{\Sigma}^\mathrm{R}, \tilde{V}^\mathrm{R})$;
\EndIf
\EndFor
\end{algorithmic}
\end{algorithm}

Algorithm~\ref{algo:DetailedRFDmapSVDs} involves one evaluation of $\nabla f$ (in line~\ref{algo:DetailedRFDmapSVDs:GradientEvaluation}), at most
\begin{equation}
\label{eq:MaxNumIterationsDetailedRFDmapSVDs}
1+\max\left\{0, \left\lfloor\ln\bigg(\frac{2\beta(1-c)}{\alpha\lip_{B[X,\alpha\s(X; f, \R_{\le r}^{m \times n})]}(\nabla f)}\bigg)/\ln \beta\right\rfloor\right\}
\end{equation}
evaluations of $f$ (in the while loop that is executed), several matrix multiplications, one small scale SVD (in line~\ref{algo:DetailedRFDmapSVDs:SmallSVD1} or \ref{algo:DetailedRFDmapSVDs:SmallSVD2}), and at most one large scale truncated SVD (in line~\ref{algo:DetailedRFDmapSVDs:TruncatedSVD}). The upper bound \eqref{eq:MaxNumIterationsDetailedRFDmapSVDs} follows from \eqref{eq:RFDmapMaxNumIterationsWhile}.

Let us mention that, if one is only interested in the $\rfd$ map, then the SVD computed in line~\ref{algo:DetailedRFDmapSVDs:SmallSVD1} or \ref{algo:DetailedRFDmapSVDs:SmallSVD2} is not necessary; it can be replaced, e.g., by a QR factorization with column pivoting. However, Algorithm~\ref{algo:DetailedRFDmapSVDs} uses an SVD because it is a subroutine of Algorithm~\ref{algo:DetailedRFDRmap} which requires an SVD as input.

\begin{algorithm}[H]
\caption{Detailed $\rfd$ map via SVDs (based on \cite[Algorithms~2 and 4]{SchneiderUschmajew2015})}
\label{algo:DetailedRFDmapSVDs}
\begin{algorithmic}[1]
\Require
$(f, r, \alpha, \beta, c)$ where $f : \R^{m \times n} \to \R$ is differentiable with $\nabla f$ locally Lipschitz continuous, $r < \min\{m,n\}$ is a positive integer, $\alpha \in (0,\infty)$, and $\beta, c \in (0,1)$.
\Input
$(U, \Sigma, V)$ where $U \Sigma V^\tp \in \R_{\ushort{r}}^{m \times n}$ is an SVD, $\ushort{r} \in \{1, \dots, r\}$, and $\s(U \Sigma V^\tp; f, \R_{\le r}^{m \times n}) > 0$.
\Output
$(\tilde{U}, \tilde{\Sigma}, \tilde{V})$ where $\tilde{U}\tilde{\Sigma}\tilde{V}^\tp \in \R_{\tilde{r}}^{m \times n}$ is an SVD, $\tilde{r} \in \{1, \dots, r\}$, and $\tilde{U}\tilde{\Sigma}\tilde{V}^\tp \in \hyperref[algo:RFDmap]{\rfd}(U \Sigma V^\tp; f, r, \alpha, \alpha, \beta, c)$.

\State
$\bar{G} \gets -\nabla f(U \Sigma V^\tp)$;
$\hat{G}_1 \gets U^\tp \bar{G}$;
$\hat{G}_2 \gets \bar{G} V$;
$s_1 \gets \norm{\hat{G}_1}^2$;
$s_2 \gets \norm{\hat{G}_2}^2$;
\label{algo:DetailedRFDmapSVDs:GradientEvaluation}
\If
{$s_1 \ge s_2$}
\State
$\hat{X}_2 \gets \Sigma V^\tp$;
\label{algo:DetailedRFDmapSVDs:G1}
\If
{$\ushort{r} = r$}
\While
{$f(U(\hat{X}_2+\alpha \hat{G}_1)) > f(U \Sigma V^\tp) - c \, \alpha \, s_1$}
\label{algo:DetailedRFDmapSVDs:r}
\State
$\alpha \gets \alpha \beta$;
\EndWhile
\State
Compute an SVD $\hat{U} \tilde{\Sigma} \tilde{V}^\tp \in \R_{\tilde{r}}^{\ushort{r} \times n}$ of $\hat{X}_2+\alpha \hat{G}_1$;
\label{algo:DetailedRFDmapSVDs:SmallSVD1}
\State
$\tilde{U} \gets U \hat{U}$;
\label{algo:DetailedRFDmapSVDs:rend}
\Else
\State
$\tilde{G} \gets \bar{G}-U\hat{G}_1+(U(\hat{G}_1V)-\hat{G}_2)V^\tp$;
\If
{$\tilde{G} = 0_{m \times n}$}
\State
Repeat lines \ref{algo:DetailedRFDmapSVDs:r} to \ref{algo:DetailedRFDmapSVDs:rend};
\Else
\State
Compute a truncated SVD $\ushort{U}_\perp \ushort{\Sigma} \ushort{V}_\perp^\tp \in \R_{\le r-\ushort{r}}^{m \times n}$ of $\tilde{G} \in \R_{\le \min\{m,n\}-\ushort{r}}^{m \times n}$;
\label{algo:DetailedRFDmapSVDs:TruncatedSVD}
\State
$s_1 \gets s_1+\norm{\ushort{\Sigma}}^2$; $r_0 \gets \rank \ushort{\Sigma}$;
\While
{$f([U \; \ushort{U}_\perp] \begin{bmatrix} \hat{X}_2 + \alpha \hat{G}_1 \\ \alpha \ushort{\Sigma} \ushort{V}_\perp^\tp \end{bmatrix}) > f(USV^\tp) - c \, \alpha \, s_1$}
\State
$\alpha \gets \alpha \beta$;
\EndWhile
\State
Compute an SVD $\hat{U} \tilde{\Sigma} \tilde{V}^\tp \in \R_{\tilde{r}}^{\ushort{r}+r_0 \times n}$ of $\begin{bmatrix} \hat{X}_2 + \alpha \hat{G}_1 \\ \alpha \ushort{\Sigma} \ushort{V}_\perp^\tp \end{bmatrix}$;
\label{algo:DetailedRFDmapSVDs:SmallSVD2}
\State
$\tilde{U} \gets [U \; \ushort{U}_\perp] \hat{U}$;
\EndIf
\EndIf
\label{algo:DetailedRFDmapSVDs:G1end}
\Else
\State
Repeat mutatis mutandis lines~\ref{algo:DetailedRFDmapSVDs:G1} to \ref{algo:DetailedRFDmapSVDs:G1end} with $s_1$ replaced by $s_2$;
\EndIf
\end{algorithmic}
\end{algorithm}

In the worst case, i.e., if the input has rank $r$ and its smallest singular value is smaller than or equal to $\Delta$, then, assuming that $r > 1$, Algorithm~\ref{algo:DetailedRFDRmap} calls Algorithm~\ref{algo:DetailedRFDmapSVDs} twice. Besides matrix multiplications, this requires two evaluations of $\nabla f$, at most two times \eqref{eq:MaxNumIterationsDetailedRFDmapSVDs} evaluations of $f$, two small scale SVDs, and one large scale truncated SVD.

\begin{algorithm}[H]
\caption{Detailed $\rfdr$ map}
\label{algo:DetailedRFDRmap}
\begin{algorithmic}[1]
\Require
$(f, r, \alpha, \beta, c, \Delta)$ where $f : \R^{m \times n} \to \R$ is differentiable with $\nabla f$ locally Lipschitz continuous, $r < \min\{m,n\}$ is a positive integer, $\alpha \in (0,\infty)$, $\beta, c \in (0,1)$, and $\Delta \in (0,\infty)$.
\Input
$(U, \Sigma, V)$ where $U \Sigma V^\tp \in \R_{\ushort{r}}^{m \times n}$ is an SVD, $\ushort{r} \in \{1, \dots, r\}$, and $\s(U \Sigma V^\tp; f, \R_{\le r}^{m \times n}) > 0$.
\Output
$(\tilde{U}, \tilde{\Sigma}, \tilde{V})$ where $\tilde{U}\tilde{\Sigma}\tilde{V}^\tp \in \R_{\tilde{r}}^{m \times n}$ is an SVD, $\tilde{r} \in \{1, \dots, r\}$, and $\tilde{U}\tilde{\Sigma}\tilde{V}^\tp \in \hyperref[algo:RFDRmap]{\rfdr}(U \Sigma V^\tp; f, r, \alpha, \alpha, \beta, c, \Delta)$.

\State
$(\tilde{U}, \tilde{\Sigma}, \tilde{V}) \gets \text{Algorithm~\ref{algo:DetailedRFDmapSVDs}}(U, \Sigma, V; f, r, \alpha, \beta, c)$;
\If
{$\ushort{r} = r$ and $\Sigma(r,r) \le \Delta$}
\If
{$r > 1$}
\State
$(\tilde{U}^\mathrm{R}, \tilde{\Sigma}^\mathrm{R}, \tilde{V}^\mathrm{R}) \gets \text{Algorithm~\ref{algo:DetailedRFDmapSVDs}}(U(:, 1:r-1), \Sigma(1:r-1, 1:r-1), V(:, 1:r-1); f, r, \alpha, \beta, c)$;
\Else
\State
$(\tilde{U}^\mathrm{R}, \tilde{\Sigma}^\mathrm{R}, \tilde{V}^\mathrm{R}) \gets \text{Algorithm~\ref{algo:DetailedMapsZeroInput}}(f, r, \alpha, \beta, c)$;
\EndIf
\If
{$f(\tilde{U}^\mathrm{R} \tilde{\Sigma}^\mathrm{R} (\tilde{V}^\mathrm{R})^\tp) < f(\tilde{U} \tilde{\Sigma} \tilde{V}^\tp)$}
\State
$(\tilde{U}, \tilde{\Sigma}, \tilde{V}) \gets (\tilde{U}^\mathrm{R}, \tilde{\Sigma}^\mathrm{R}, \tilde{V}^\mathrm{R})$;
\EndIf
\EndIf
\end{algorithmic}
\end{algorithm}

Table~\ref{tab:CostP2GDRvsRFDR} summarizes the operations required by Algorithms~\ref{algo:DetailedP2GDRmap} and \ref{algo:DetailedRFDRmap} in the case where $r > 1$, the input has rank $r$, and its largest singular value is smaller than or equal to $\Delta$. We observe that Algorithm~\ref{algo:DetailedP2GDRmap} requires more of each operation, except perhaps the evaluation of $f$, than Algorithm~\ref{algo:DetailedRFDRmap}.

\begin{table}[h]
\begin{center}
\begin{tabular}{*{3}{l}}
\hline
Operation & Algorithm~\ref{algo:DetailedP2GDRmap} ($\ppgdr$) & Algorithm~\ref{algo:DetailedRFDRmap} ($\rfdr$)\\
\hline\hline
evaluation of $f$ & up to $\eqref{eq:MaxNumIterationsDetailedMapsZeroInput} + r\cdot \eqref{eq:MaxNumIterationsDetailedP2GDmap}$ & up to $2 \cdot \eqref{eq:MaxNumIterationsDetailedRFDmapSVDs}$ \\
\hline
evaluation of $\nabla f$ & $r+1$ & $2$ \\
\hline
QR factorization with c. p. & up to $4r-2$ & $0$ \\
\hline
small scale (truncated) SVD & $r$ times \eqref{eq:MaxNumIterationsDetailedP2GDmap} & $2$ \\
\hline
large scale truncated SVD & $r$ & $1$ \\
\hline
\end{tabular}
\end{center}
\caption{Operations required by Algorithms~\ref{algo:DetailedP2GDRmap} and \ref{algo:DetailedRFDRmap} in the case where $r > 1$, the input has rank $r$, and its largest singular value is smaller than or equal to $\Delta$.}
\label{tab:CostP2GDRvsRFDR}
\end{table}

\section{Conclusion}
\label{sec:Conclusion}
We close this work with three concluding remarks.
\begin{enumerate}
\item As in \cite{LevinKileelBoumal2022} and \cite{OlikierGallivanAbsil2022}, everything said in this paper remains true if $f$ is only defined on an open subset of $\R^{m \times n}$ containing $\R_{\le r}^{m \times n}$.

\item We stated in Section~\ref{sec:Introduction} that $\rfd$ can follow apocalypses. It can indeed be verified that $\rfd$ follows the apocalypses described in \cite[\S 2.2]{LevinKileelBoumal2022} and \cite[\S 3.2]{OlikierGallivanAbsil2022Comparison}. In fact, for these two instances of \eqref{eq:LowRankOpti}, $\rfd$ and $\rfdr$ respectively produce the same sequences of iterates as $\ppgd$ and $\ppgdr$. The main reason is that, for each of the two instances, if $(X_i)_{i \in \N}$ denotes the sequence produced by $\ppgd$ or $\ppgdr$, then, for all $i \in \N$,
\begin{equation}
\label{eq:ApproximateProjectionCoincidesWithProjection}
\proj{\restancone{\R_{\le r}^{m \times n}}{X_i}}{-\nabla f(X_i)} =
\proj{\tancone{\R_{\le r}^{m \times n}}{X_i}}{-\nabla f(X_i)}.
\end{equation}

\item As $\rfd$, $\rfdr$ requires the computation of at most one large scale truncated SVD per iteration. An open question is whether it is possible to design a first-order optimization algorithm on $\R_{\le r}^{m \times n}$ offering the same convergence properties as $\rfdr$ without involving any large scale truncated SVD.
\end{enumerate}

\bibliographystyle{siamplain}
\bibliography{golikier_bib}
\end{document}